\newtheorem{theorem}{Theorem}
\newtheorem{conj}{Conjecture}
\newtheorem{lemma}{Lemma}
\newtheorem{proposition}[theorem]{Proposition}
\theoremstyle{definition}
\newtheorem{remark}{Remark}
\newtheorem{question}{Question}
\newcommand{\lm}{\ell m}
\newcommand{\newsection}{\bigskip}
\newcommand{\newsubsection}{\medskip}
\newcommand{\lideal}{(}
\newcommand{\rideal}{)}
\DeclareMathOperator{\HS}{HS}
\title[On ideals gen. by two generic quadratic forms in the 
exterior alg.]{On ideals generated by two generic quadratic forms in the exterior algebra}
\subjclass[2010]{Primary: 13A02; Secondary: 05A15, 13D40, 15A22, 15A75}
\keywords{Hilbert series, exterior algebra,  generic forms}
\author[V. Crispin]{Veronica Crispin Qui{\~n}onez}
\address[V. Crispin]{Department of Mathematics, Uppsala University, S-751 06, Uppsala, Sweden} 
\email{veronica.crispin@math.uu.se}
\author[S. Lundqvist]{Samuel Lundqvist}
\author[G. Nenashev]{ Gleb Nenashev}
\address[S. Lundqvist, G. Nenashev]{   Department of Mathematics,
            Stockholm University,
            S-106 91, Stockholm, Sweden}
\email{samuel@math.su.se, nenashev@math.su.se}
\begin{document}

\maketitle

\begin{abstract}
Based on the structure theory of pairs of skew-symmetric matrices, 
we give a conjecture for the Hilbert series of the exterior algebra modulo the ideal generated by two generic quadratic forms. We show that the conjectured series is an upper bound in the coefficient-wise sense, and we determine a 
majority of the coefficients. We also conjecture that the series is equal to the series of the squarefree polynomial ring modulo the ideal generated by the squares of  two generic linear forms.
\end{abstract}

\section{Introduction} \label{sec:1}

Let $S_n = \mathbb{C}[x_1,\ldots,x_n]$ and let $I = \lideal f_1,\ldots,f_r\rideal$ be a homogeneous ideal in $S_n$. There is a longstanding conjecture due to Fr\"oberg \cite{frobergconj} about the minimal Hilbert series of $S_n/I$, namely that it equals

$$\left[ \frac{\prod (1-t^{d_r})}{(1-t)^n} \right],$$ where $d_r$ denotes the degree of the form $f_r$, and where $[1+a_1 t + a_2 t^2 + \cdots ]$ means truncate before the first non-positive term.
It is clear that the minimal series is attained when the $f_i$'s are generic forms, that is, forms with mutually algebraically independent coefficients over $\mathbb{Q}$. 
The conjecture is proved only in some special cases, including  $n=2$ by Fr\"oberg~\cite{frobergconj}, $n=3$ by Anick~\cite{anick} and $r\le n+1$ by Stanley~\cite{stanley}.

Let $E_n$ denote the exterior algebra on $n$ generators over $\mathbb{C}$.
 It is natural to believe that the Hilbert series of $E_n/\lideal f_1,\ldots,f_r\rideal$, with the $f_i'$s being generic forms of even degree, should be equal to $\left[ {(1+t)^n} {\prod (1-t^{d_r})} \right]$.
 Moreno-Soc\'ias and Snellman \cite{morenosnellman} showed that this is true when $I$ is a principal ideal generated by an even element.  
 However, Fr\"oberg and L\"ofwall  \cite[Theorem 10.1]{froberg} showed that for two generic quadratic forms in $E_5$, the Hilbert series equals $1 + 5t + 8t^2 + t^3$, while $[(1+t)^5(1-t^2)^2] = 1 + 5t + 8t^2$.

Starting in 2016, a group connected to the \emph{Stockholm problem solving seminar} has been working on the problem of determining the minimal Hilbert series for quotients of exterior algebras. 
In this paper, the case of two quadratic forms is considered. The results on principal ideals generated by an element of odd degree is presented in a separate paper \cite{lu-ni}.

The outline is as follows.
In Section \ref{sec:can}, we will use the structure theory of skew-symmetric matrices in order to reduce the problem of finding the minimal series among all forms $f$ and $g$ to certain pairs $\lideal \bar{f},\bar{g}\rideal$ of \emph{canonical forms}. 

In Section \ref{sec:comb}
 we combine the results from the previous section with combinatorial methods 
and we show that the result by Fr\"oberg and L\"ofwall is just the tip of the iceberg. We are able to determinine the first $\lfloor n/3 \rfloor + 1$ coefficients, and the last non-zero coefficient of the generic series, and we have a conjecture for the series as a whole.

\begin{conj}\label{conj:paths}
Let $f$ and $g$ be generic quadratic forms in the exterior algebra $E_n$. Then the Hilbert series of $E_n/\lideal f,g\rideal$ is equal to 
$1 + a(n,1) t + a(n,2) t^2 + \cdots +  \cdots + a(n,\lceil\frac{n}{2}\rceil)t^{\lceil\frac{n}{2}\rceil}$, where $a(n,s)$ is the number of lattice paths inside the rectangle $(n+2-2s)\times (n+2)$ from the bottom left corner to the top right corner with moves of two types: $(x,y)\rightarrow (x+1,y+1) \textrm{\ or\ } (x-1,y+1)$.
\end{conj}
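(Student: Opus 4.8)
The plan is to push the question out of the exterior algebra and into combinatorics. First I would invoke the reduction of Section~\ref{sec:can}: for generic $f,g$ the pair is equivalent, after a linear (hence Hilbert-series preserving) change of coordinates, to an explicit canonical pair $\lideal\bbar f,\bbar g\rideal$. For $n$ odd this is the \emph{path pair} $\bbar f=x_1x_2+x_3x_4+\cdots$, $\bbar g=x_2x_3+x_4x_5+\cdots$, whose two supports together cover the path graph on the vertices $1,\dots,n$; for $n=2m$ even it is the \emph{diagonal pair} $\bbar f=\sum_i x_{2i-1}x_{2i}$, $\bbar g=\sum_i\lambda_i x_{2i-1}x_{2i}$ with the $\lambda_i$ pairwise distinct. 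Thus it suffices to show $\dim_{\mathbb{C}}(E_n/\lideal\bbar f,\bbar g\rideal)_s=a(n,s)$ for these two families, and since the combinatorial methods of Section~\ref{sec:comb} already supply the upper bound $\dim(E_n/\lideal\bbar f,\bbar g\rideal)_s\le a(n,s)$, only the matching lower bound is missing.

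For the lower bound I would exhibit an explicit family of $a(n,s)$ squarefree monomials of degree $s$, one for each lattice path $P$ counted by the conjecture, obtained by reading off the $\pm1$ increments of $P$ and translating them, vertex by vertex along the path graph on $1,\dots,n$, into a choice of which variables to include; write $x_{S_P}$ for the resulting monomial, the height of $P$ along the way recording the dimension of a local ``defect'' space. The task is then to prove that the $x_{S_P}$ are linearly independent in $E_n/\lideal\bbar f,\bbar g\rideal$, which together with the upper bound of Section~\ref{sec:comb} makes them a basis and establishes the conjectured dimension.

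The independence is the crux. One approach is to construct, for each path $P$, a linear functional $\varphi_P\colon(E_n)_s\to\mathbb{C}$ that annihilates $\bbar f(E_n)_{s-2}+\bbar g(E_n)_{s-2}$ and satisfies $\varphi_P(x_{S_Q})=\delta_{P,Q}$. Since $\bbar f$ and $\bbar g$ are both sums of edge-terms of a path, such a functional ought to factor as a tensor product along the vertices of $2\times 2$ local pieces, with the whole family organized by a transfer matrix that reproduces the count $a(n,s)$; what must be checked is that this tensor product really kills the ideal, a local-to-global verification along the path. An alternative is an induction on $n$: passing from $E_{n-1}$ to $E_n$ adds one variable and a single new edge-term to one of $\bbar f,\bbar g$, so the Pascal-type recursion obeyed by the $a(n,s)$ should be mirrored by a short filtration of $E_n/\lideal\bbar f,\bbar g\rideal$ over the corresponding quotient for $E_{n-1}$. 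In the even case, where $\bbar f$ and $\bbar g$ lie in the subalgebra generated by the mutually commuting square-zero elements $e_i=x_{2i-1}x_{2i}$, I would instead try a direct module-theoretic computation over $\mathbb{C}[e_1,\dots,e_m]/(e_1^2,\dots,e_m^2)$ and then match it to $a(2m,s)$.

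The hard part will be exactly this linear independence: it is equivalent to a maximal-rank statement for the multiplication map $(u,v)\mapsto\bbar f u+\bbar g v$ on $(E_n)_{s-2}\oplus(E_n)_{s-2}$, the same flavour of genericity claim that keeps Fr\"oberg-type problems open, and the difficulty is concentrated in the high degrees $s>\lfloor n/3\rfloor+1$, precisely the range the partial results of Section~\ref{sec:comb} do not cover. One might also route through the conjectured coincidence with the Hilbert series of the squarefree polynomial ring modulo the squares of two generic linear forms $\ell_1,\ell_2$: there the quadrics $\ell_1^2,\ell_2^2$ have rank one, so the graded pieces of the ideal are spanned by the products of $\ell_1^2$ and $\ell_2^2$ with squarefree monomials and are combinatorially more transparent --- but proving the coincidence itself looks no easier than the original problem.
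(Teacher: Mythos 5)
There is a genuine gap, and it is worth being clear about its nature: the statement you are trying to prove is stated in the paper as a \emph{conjecture}, and the paper does not prove it either. What the paper actually establishes is (i) the reduction to canonical pairs (your first paragraph reproduces this correctly; note that in the odd case the canonical pair of Theorem~\ref{thm:Cstructure} is your ``path pair'' only after relabelling the variables along the path $(k+1),1,(k+2),2,\dots$), (ii) the upper bound $\dim(E_n/\lideal f,g\rideal)_s\le a(n,s)$ (Theorem~\ref{thm:upper}, via a two-orderings/leading-monomial argument on the sets $L_{n,s}$ and $R_{n,s}$ of paths crossing the two walls), (iii) the exact value of the coefficients for $s\le\lfloor n/3\rfloor+1$ and at the top degree (Propositions~\ref{prop:postulation} and~\ref{prop:equal}, where the lower bound comes from the Fr\"oberg-type estimate $[(1+t)^n(1-t^2)^2]$ and from explicit monomials outside the ideal), and (iv) computer verification for odd $n\le 19$. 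Everything in your proposal up to and including the upper bound is therefore consistent with the paper, but it is not new relative to it.

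The missing piece is exactly the one you flag yourself: the lower bound in degrees $s>\lfloor n/3\rfloor+1$, equivalently the linear independence of your candidate monomials $x_{S_P}$, equivalently the statement that the map $(u,v)\mapsto\bbar f u+\bbar g v$ drops rank by no more than the path count predicts. None of your three suggested routes (dual functionals organized by a transfer matrix, induction on $n$ via a Pascal-type filtration, or the module computation over $\mathbb{C}[e_1,\dots,e_m]/(e_1^2,\dots,e_m^2)$ in the even case) is carried out, and the last one in fact only relocates the problem: the paper's Lemma~\ref{lem:rec-conj} shows that the even exterior case is equivalent to the commutative squarefree problem $S_m/\lideal x_1^2,\dots,x_m^2,\ell_1^2,\ell_2^2\rideal$, which is itself the open Conjecture~\ref{conj:paths-sqfree}. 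So the proposal is a reasonable research plan whose first half duplicates the paper's partial results, but it does not constitute a proof, and the step it leaves open is precisely the step that is open in the literature.
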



We show in Theorem \ref{thm:upper} that the conjectured series is an \emph{upper bound} for $E_n/\lideal f,g \rideal$. This gives us a situation that is opposite to the Fr\"oberg conjecture in the commutative setting, in which case a lower bound for the generic series is known.

In Section \ref{sec:squarefree}, we turn to the Hilbert series of $S_n/\lideal x_1^2,\ldots,x_n^2,\ell_1^2,\ell_2^2 \rideal$, $\ell_1$ and $\ell_2$ being generic linear forms in $S_n$.
It is known that the Hilbert series of $S_n/\lideal x_1^2,\ldots,x_n^2,\ell_1^2\rideal$ is equal to $[(1+t)^n(1-t^2)]$ (see~\cite{stanley}). (In fact, this is a special case of the result in loc. cit. that every artinian monomial complete intersection has the Strong Lefschetz property.)
But the Hilbert series of $S_n/\lideal x_1^2,\ldots,x_n^2,\ell_1^2,\ell_2^2\rideal$ is not 
equal to $[(1+t)^n(1-t^2)^2]$. The first published counterexamples were given in~\cite{froberghollman}. In the last part of the paper, we present a surprising connection between the Hilbert series of 
$S_n/\lideal x_1^2,\ldots,x_n^2,\ell_1^2,\ell_2^2\rideal$ and $E_n/( f,g)$, where $f$ and $g$ are generic quadratic forms.

\begin{conj}\label{conj:paths-sqfree}
Let $\ell_1$ and $\ell_2$ be generic linear forms in $S_n$. Then the Hilbert series of $S_n/\lideal x_1^2,\ldots,x_n^2,\ell_1^2,\ell_2^2\rideal$ is equal to 
$1 + a(n,1) t + a(n,2) t^2 + \cdots$, where $a(n,s)$ is as in Conjecture~\ref{conj:paths}.
\end{conj}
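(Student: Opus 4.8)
The plan is to deduce Conjecture~\ref{conj:paths-sqfree} from Conjecture~\ref{conj:paths} by showing the two quotients have the \emph{same} Hilbert series, and then to salvage whatever part of this can be made unconditional. Both $S_n/\lideal x_1^2,\ldots,x_n^2\rideal$ and $E_n$ have, as graded vector spaces, a basis $\{m_S : S\subseteq\{1,\ldots,n\}\}$ indexed by subsets, and in both rings $m_S\cdot m_T$ equals $\pm m_{S\cup T}$ when $S\cap T=\emptyset$ and vanishes otherwise; the only difference is the sign, which is identically $+1$ in the squarefree ring and is the Koszul sign $\varepsilon(S,T)$ in $E_n$. The first step is a normalization: since $\ell_1$ and $\ell_2$ are generic, all coefficients of $\ell_1=\sum c_i x_i$ are nonzero, so after the rescaling $x_i\mapsto c_i^{-1}x_i$ --- which fixes the ideal $\lideal x_1^2,\ldots,x_n^2\rideal$ --- we may assume $\ell_1=x_1+\cdots+x_n$ and $\ell_2=\sum e_ix_i$ with the $e_i$ still generic. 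Modulo $\lideal x_1^2,\ldots,x_n^2\rideal$ we then have $\ell_1^2\equiv 2\sum_{i<j}x_ix_j$ and $\ell_2^2\equiv 2\sum_{i<j}e_ie_jx_ix_j$, so the whole question is the ranks of the multiplication maps $(a,b)\mapsto \ell_1^2 a+\ell_2^2 b$ on the squarefree ring.

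Second, I would try to match these ranks with the ones for $E_n/\lideal f,g\rideal$ produced by the canonical-pair reduction of Section~\ref{sec:can}. Concretely, the goal is to exhibit a term order $\prec$ on the squarefree monomials for which $\mathrm{in}_\prec\lideal x_1^2,\ldots,x_n^2,\ell_1^2,\ell_2^2\rideal$ is a monomial ideal $J$ with $\HS(S_n/\lideal x_1^2,\ldots,x_n^2\rideal /J)=1+a(n,1)t+a(n,2)t^2+\cdots$, and to check that this \emph{same} $J$ occurs as an initial ideal of $\lideal \bbar f,\bbar g\rideal$ in $E_n$ for a compatible order; since a monomial quotient of $E_n$ and of the squarefree ring by the same monomial ideal have identical Hilbert series, this would finish the reduction. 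The technical heart is that erasing the Koszul signs $\varepsilon(S,T)$ does not change the maximal minors governing the relevant ranks, because the sign pattern along the $(x,y)\mapsto(x+1,y+1)$ and $(x,y)\mapsto(x-1,y+1)$ lattice paths is \emph{coherent} --- it can be absorbed into a diagonal change of basis $m_S\mapsto\pm m_S$. This coherence is exactly why the same lattice-path model governs both sides, and a clean instance of it would be an explicit Gr\"obner basis of $\lideal x_1^2,\ldots,x_n^2,\ell_1^2,\ell_2^2\rideal$.

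The main obstacle is the same one that keeps Conjecture~\ref{conj:paths} open: the \emph{lower} bound, i.e.\ that the multiplication maps above have at least the conjectured corank. On the other hand I expect the normalization of the first paragraph to transport the \emph{upper} bound argument of Theorem~\ref{thm:upper}, together with the combinatorics of Section~\ref{sec:comb}, essentially verbatim to the squarefree ring, so that unconditionally $\HS\bigl(S_n/\lideal x_1^2,\ldots,x_n^2,\ell_1^2,\ell_2^2\rideal\bigr)$ is coefficient-wise at most $1+a(n,1)t+a(n,2)t^2+\cdots$, and that the first $\lfloor n/3\rfloor+1$ coefficients and the last nonzero coefficient can likewise be pinned down by the methods of Section~\ref{sec:comb}. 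What remains genuinely hard is (a) the missing middle coefficients of the lower bound, exactly as in the exterior case, and (b) showing that no accidental rank drop occurs when the signs are erased --- that is, that the specialization $\ell_1^2,\ell_2^2$ in the squarefree ring is still ``as generic as'' the canonical exterior pair $\bbar f,\bbar g$. Settling (b) is, in my view, the crux of a proof.
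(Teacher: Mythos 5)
This statement is a conjecture; the paper does not prove it, and your proposal (as you acknowledge) does not either. The parts of your plan that are unconditional do match the paper: the upper bound transports verbatim to the squarefree ring (this is Theorem~\ref{thm:upper:sq-free}), and a conditional deduction of Conjecture~\ref{conj:paths-sqfree} from Conjecture~\ref{conj:paths} is exactly the content of Theorem~\ref{thm:conj12}. But the bridge you propose between the two algebras is not the paper's, and its key step is flawed.

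Your technical heart is the claim that the Koszul signs $\varepsilon(S,T)$ can be absorbed into a diagonal change of basis $m_S\mapsto d_S m_S$, so that $E_n/\lideal f,g\rideal$ and the squarefree quotient share an initial ideal. This cannot work as stated: a diagonal rescaling multiplies the structure constant $\varepsilon(S,T)$ by $d_{S\cup T}/(d_Sd_T)$, which is symmetric in $S$ and $T$, whereas $\varepsilon(S,T)=-\varepsilon(T,S)$ whenever $|S|$ and $|T|$ are both odd. The sign cocycle is not a coboundary, the two algebras are genuinely non-isomorphic (as the paper stresses), and there is no reason to expect a common initial ideal. The paper's actual equivalence is of a completely different and non-degree-preserving nature: writing the canonical forms as $f=\sum b_ix_{2i-1}x_{2i}$ and $g=\sum c_ix_{2i-1}x_{2i}$ (Theorem~\ref{thm:Cstructure}, even case), one sorts monomials of $E_{2k}$ by the set $I$ of blocks they meet in exactly one variable; such a block contributes a factor $2^{|I|}\binom{k}{|I|}$ and annihilates $x_{2i-1}x_{2i}$, and what remains lives in the commutative subalgebra generated by the $x_{2i-1}x_{2i}$, which is a squarefree polynomial ring in $k-|I|$ degree-two variables modulo two generic linear forms in those variables. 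This yields the recursion of Lemma~\ref{lem:rec-conj}, relating degree $s$ in $E_{2k}$ to degree $\frac{s-r}{2}$ in $S_{k-r-2}$, matched by the identical path recursion of Lemma~\ref{lem:rec-path}. Note that this equivalence only touches even $n$ on the exterior side, which is why Conjecture~\ref{conj:paths-sqfree} is strictly weaker than Conjecture~\ref{conj:paths}; a degree-preserving initial-ideal identification of the kind you envisage would prove more than Theorem~\ref{thm:conj12} does, which is a further indication that the proposed mechanism is too strong to be correct.
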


Although the conjectures give the same Hilbert series for different and non-isomorphic algebras,  Conjecture~\ref{conj:paths-sqfree} is weaker than Conjecture~\ref{conj:paths}. 
\begin{theorem}\label{thm:conj12} The following are equivalent
\begin{itemize}
\item Conjecture~\ref{conj:paths} holds for any even $n$;
\item Conjecture~\ref{conj:paths-sqfree} holds for any $n$.
\end{itemize}
\end{theorem}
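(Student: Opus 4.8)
The plan is to reduce both conjectures to a matching \emph{lower} bound on the Hilbert series and then transfer that bound between the two algebras. Throughout, write $P_n(t)=1+\sum_{s\ge 1}a(n,s)\,t^s$ for the lattice-path polynomial of Conjecture~\ref{conj:paths}. By Theorem~\ref{thm:upper} we already have $\HS(E_n/\lideal f,g\rideal)\preceq P_n(t)$ coefficientwise for every $n$. The core of the argument is the following comparison, to be established using the structure theory of Section~\ref{sec:can}: for \emph{even} $n$,
$$\HS(E_n/\lideal f,g\rideal)\;=\;\HS(S_n/\lideal x_1^2,\ldots,x_n^2,\ell_1^2,\ell_2^2\rideal),$$
together with the one-sided bound $\HS(S_n/\lideal x_1^2,\ldots,x_n^2,\ell_1^2,\ell_2^2\rideal)\preceq\HS(E_n/\lideal f,g\rideal)$ for \emph{all} $n$, so that with Theorem~\ref{thm:upper} one gets $\HS(S_n/\lideal x_1^2,\ldots,x_n^2,\ell_1^2,\ell_2^2\rideal)\preceq P_n(t)$ unconditionally. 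Given this, Conjecture~\ref{conj:paths} for $n$ is equivalent to $\HS(E_n/\lideal f,g\rideal)\succeq P_n(t)$, Conjecture~\ref{conj:paths-sqfree} for $n$ to $\HS(S_n/\lideal x_1^2,\ldots,x_n^2,\ell_1^2,\ell_2^2\rideal)\succeq P_n(t)$, and for each even $n$ the two conjectures are then plainly equivalent; in particular the implication ``Conjecture~\ref{conj:paths-sqfree} holds for all $n$'' $\Rightarrow$ ``Conjecture~\ref{conj:paths} holds for all even $n$'' follows at once.

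To prove the comparison, recall that if $\ell=\sum c_ix_i$ then $\ell^2=\sum_{i<j}2c_ic_j\,x_ix_j$ in $S_n/\lideal x_1^2,\ldots,x_n^2\rideal$, so $\lideal\ell_1^2,\ell_2^2\rideal$ is governed by the pencil of symmetric matrices $\lambda\,c_1c_1^{T}+\mu\,c_2c_2^{T}$ (off-diagonal parts), of rank at most two, whereas by Section~\ref{sec:can} the ideal $\lideal f,g\rideal$ in $E_n$ is governed by a generic pencil of skew-symmetric matrices put in canonical form. I would identify the squarefree quotient with the quotient attached to one distinguished Kronecker block of that canonical form (alternatively, interpolate between the two algebras by a flat family with constant Hilbert series $(1+t)^n$ and track the two-dimensional space of quadrics through it, using upper-semicontinuity of Hilbert functions), and then match the two Hilbert-series computations block by block. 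The $\preceq$ half for all $n$ should follow from the same comparison together with the fact that specializing the relations can only enlarge the quotient.

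It remains, for the reverse implication of the theorem, to obtain Conjecture~\ref{conj:paths-sqfree} for \emph{odd} $n$ from Conjecture~\ref{conj:paths} for even indices. Setting $x_n=0$ gives a surjection $S_n/\lideal x_1^2,\ldots,x_n^2,\ell_1^2,\ell_2^2\rideal\twoheadrightarrow S_{n-1}/\lideal x_1^2,\ldots,x_{n-1}^2,\bar\ell_1^2,\bar\ell_2^2\rideal$ with $\bar\ell_i=\ell_i|_{x_n=0}$ still generic, whose kernel is $x_n$ times an explicit quotient of $S_{n-1}/\lideal x_1^2,\ldots,x_{n-1}^2\rideal$; this produces a recursion $\HS(S_n/\lideal\cdots\rideal)=(1+t)\HS(S_{n-1}/\lideal\cdots\rideal)-C_{n-1}(t)$ with $C_{n-1}$ of non-negative coefficients concentrated in the top degrees, paralleling an identity $P_n(t)=(1+t)P_{n-1}(t)-(\text{top-degree terms})$ for the path polynomials that is checked combinatorially. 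Feeding in $\HS(S_{n-1}/\lideal\cdots\rideal)=P_{n-1}$ (known, since $n-1$ is even, by the first step) and the upper bound $\HS(S_n/\lideal\cdots\rideal)\preceq P_n$, the recursion forces equality with $P_n$ in all degrees but the top one or two, and those are pinned by the unconditional determination of the last non-zero coefficient in Section~\ref{sec:comb} and its squarefree analogue in Section~\ref{sec:squarefree}. The main obstacle is the even-$n$ comparison of the second paragraph — matching a canonical skew-symmetric pencil with a symmetric pencil of rank at most two sharply enough to obtain equality rather than merely the one-sided bound, the delicate case being when the skew pencil has a large regular part so that the lattice-path count must be reproduced by the canonical form exactly; a secondary difficulty is computing $C_{n-1}(t)$ precisely in its top degrees, where the formula for the last non-zero coefficient should close the gap.
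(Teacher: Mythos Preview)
Your argument rests on establishing, unconditionally for every even $n$, the equality $\HS(E_n/\lideal f,g\rideal)=\HS(S_n/\lideal x_1^2,\ldots,x_n^2,\ell_1^2,\ell_2^2\rideal)$. But this equality is itself what the two conjectures jointly assert in those degrees, so it cannot serve as an input to the equivalence; and the sketch you give for it does not work. The skew-symmetric pencil governing $(f,g)$ is generic of full rank $n$ (this is precisely what Theorem~\ref{thm:Cstructure} exploits), whereas the symmetric pencil $\lambda\,c_1c_1^{T}+\mu\,c_2c_2^{T}$ you attach to $(\ell_1^2,\ell_2^2)$ has rank at most $2$. Their Kronecker structures are completely different, so there is no ``distinguished block'' of the skew pencil that reproduces the squarefree quotient, and there is no evident flat family linking the anticommutative exterior algebra to the commutative squarefree ring through which a pair of quadrics could be deformed. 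The one-sided bound $\HS(S_n/\lideal\cdots\rideal)\preceq\HS(E_n/\lideal f,g\rideal)$ for all $n$ is likewise asserted without a mechanism.

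The paper's route avoids comparing $E_n$ with $S_n$ for the \emph{same} $n$. The even canonical form of Theorem~\ref{thm:Cstructure} writes $f$ and $g$ as linear combinations of the commuting square-zero elements $y_i:=x_{2i-1}x_{2i}$; the subalgebra they generate inside $E_{2k}$ is a degree-doubled copy of the squarefree ring in $k$ variables modulo two generic \emph{linear} forms, which is the squarefree ring in $k-2$ variables modulo two generic squares. Decomposing $E_{2k}/\lideal f,g\rideal$ over this subalgebra yields the closed identity
\[
a_e(2k,s)=\sum_{\substack{r\ge 0\\ 2\mid s-r}} 2^r\binom{k}{r}\,a_c\!\left(k-r-2,\tfrac{s-r}{2}\right),
\]
and a separate combinatorial argument shows that the lattice-path numbers $a(\cdot,\cdot)$ obey the identical recursion with $a$ in place of $a_c$. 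Combined with the squarefree upper bound $a_c\le a$ (Theorem~\ref{thm:upper:sq-free}), the two recursions force the equivalence directly: if $a_c=a$ everywhere then the sums match and $a_e(2k,\cdot)=a(2k,\cdot)$; conversely, equality of the two sums together with termwise $a_c\le a$ forces termwise equality $a_c=a$ for \emph{all} arguments, odd $n$ included, so your separate $x_n=0$ recursion is unnecessary.
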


We prove Theorem~\ref{thm:conj12} in Section \,4.2.

\newsection

\section{Canonical forms in the exterior algebra} \label{sec:can}

Here we recall the structure theory for pairs of skew-symmetric matrices over the complex field and apply it on quadratic forms in the exterior algebra  $E_n$.

Let $A,B\in \mathbb{C}^{n\times n}$ be a pair of quadratic matrices. A matrix pencil is the sum $A+\lambda B$ with a parameter  $\lambda\in\mathbb{C}$. Two matrix pencils $A+\lambda B$ and $\tilde A+\lambda\tilde B$ are said to be strictly $\mathbb{C}$-congruent if there is a nonsingular  $P\in\mathbb {C}^{n\times n}$ such that $P^t(A+\lambda B)P=\tilde A+\lambda\tilde B$ for all $\lambda$. 

A block diagonal matrix with the diagonal blocks $M_1, \ldots, M_r$ will be called a direct sum of $M_i$'s and denoted by $M_1\oplus\ldots\oplus M_r$.

\newsubsection

\subsection{Canonical forms of complex matrix pencils}

 In \cite{T} the author presents canonical matrices of complex skew-symmetric matrix pencils under congruence transformation. The description is based on a result for matrices over an algebraically closed field of characteristic $\ne 2$.   
Below we present the three canonical forms derived from section 5 in \cite{T}, in a slightly different setting and with another notation, obtained from section 5 in \cite{lancaster}, provided with comments about rank.\\

\begin{theorem} \label{thm:Ctypes} Let $A$ and $B$ be complex skew-symmetric quadratic matrices. Then the matrix pencil $A+\lambda B$ is strictly $\mathbb C$-congruent to a direct sum of possibly a zero matrix followed by diagonal blocks of three canonical types, where each type may occur some finite number of times (possibly zero) and may be of different sizes.

\begin{multline*} M_{I, 2k+1}=A_{I}+\lambda B_I=\\
\left( \begin{array}{rrrrrrr}
 &  &  &0  & \ldots & 1 &  0\\
 & O_{k} &  &\vdots &\reflectbox{$\ddots$} &   \vdots  & \vdots\\
 &  &  &  1& \ldots & 0 & 0\\
0 &\ldots  &-1  &  &  &  & 0\\
\vdots & \reflectbox{$\ddots$} & \vdots &  &  O_k &   & \vdots\\
-1 &  \ldots & 0 &  &  & & 0\\
0 & \ldots & 0 & 0 & \ldots & 0 & 0 
\end{array} \right)  + \lambda
\left( \begin{array}{rrrrrrr}
 &  &  &0  & 0 & \ldots &  1\\
 & O_{k} &  &\vdots & \vdots &   \reflectbox{$\ddots$}  & \vdots\\
 &  &  &  0& 1 & \ldots & 0\\
0 &\ldots  &0  & 0 & 0 & \ldots & 0\\
0 & \ldots & -1 & 0 &  &  & \\
\vdots &  \reflectbox{$\ddots$} & \vdots & \vdots &  & O_k & \\
-1 & \ldots & 0 & 0 &  &  & 
\end{array} \right) 
\end{multline*}
A diagonal block of type $I$ is a sum of rank $2k$ matrices. 
\begin{multline*} M_{II, 2l}=A_{II}+\lambda B_{II}=\\
\left( \begin{array}{rrrrrr}
 &  &  & 0 & \ldots & 1\\
 & O_{l}  & & \vdots & \reflectbox{$\ddots$} & \vdots\\
 &  &  & 1 &  \ldots & 0\\
 0 & \ldots & -1 &  &  & \\
 \vdots& \reflectbox{$\ddots$} & \vdots  &  & O_l  & \\
  -1&  \ldots & 0 & & & \\
\end{array} \right) + \lambda
\left( \begin{array}{rrrrrrrr}
 & &  &  & 0 & 0 & \ldots& 0\\
 & &  & & 0 & 0 & \ldots & 1\\
 & & O_{l} &  & \vdots & \vdots &  \reflectbox{$\ddots$} &\vdots \\
 & &  & & 0 & 1 & \ldots & 0\\
0 & 0 & \ldots & 0 &  & & &\\
0 & 0& \ldots  & -1 & & & & \\
\vdots  & \vdots  &  \reflectbox{$\ddots$} & \vdots & & & O_l &\\
0 & -1 & \ldots & 0& & & &\\
\end{array} \right) 
 \end{multline*}
This type is a sum of a rank $2l$ respectively rank $2l-2$ matrix.
\begin{multline*}M_{III, 2m}=A_{III}+\lambda B_{III}=\\ 
\left( \begin{array}{rrrrrrrr}
 & &  &  & 0 & 0 & \ldots& \alpha\\
 & &  & & 0 & \vdots & \reflectbox{$\ddots$}  & 1\\
 & & O_{m} &  & \vdots & \reflectbox{$\ddots$}  &  \reflectbox{$\ddots$} &\vdots \\
 & &  & & \alpha & 1 & \ldots & 0\\
0 & 0 & \ldots & -\alpha &  & & &\\
0 & \vdots& \reflectbox{$\ddots$}   & -1 & & & & \\
\vdots  & \reflectbox{$\ddots$}  &  \reflectbox{$\ddots$} & \vdots & & & O_m &\\
-\alpha & -1 & \ldots & 0& & & &\\
\end{array} \right)  + \lambda
\left( \begin{array}{rrrrrr}
 &  &  & 0 & \ldots & 1\\
 & O_{m}  & & \vdots & \reflectbox{$\ddots$} & \vdots\\
 &  &  & 1 &  \ldots & 0\\
 0 & \ldots & -1 &  &  & \\
 \vdots& \reflectbox{$\ddots$} & \vdots  &  & O_m  & \\
  -1&  \ldots & 0 & & & \\
\end{array} \right)
\end{multline*}
{\rm{[Here $\alpha$  is a finite  root (not necessarily non-zero) of an elementary divisor pair $(\alpha+\lambda)^m, (\alpha+\lambda)^m$ of $A+\lambda B$.]}}\\
A diagonal block of type $III$ is a sum of a matrix of rank $2m$ for nonzero $\alpha$ ($2m-2$ for $\alpha =0$)  and a matrix of rank $2m$.
\end{theorem}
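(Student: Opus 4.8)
The plan is to derive the classification from Kronecker's theory of matrix pencils under strict equivalence, and then to upgrade strict equivalence to strict $\mathbb C$-congruence by exploiting the skew-symmetry of $A$ and $B$. First I would invoke Kronecker's canonical form: under \emph{strict equivalence} $Q(A+\lambda B)P$, with $Q$ and $P$ independent nonsingular matrices, every pencil is a direct sum of indecomposable blocks of four kinds --- the singular column blocks $L_\epsilon$ of size $\epsilon\times(\epsilon+1)$, the singular row blocks $L_\eta^t$ of size $(\eta+1)\times\eta$, the finite Jordan blocks attached to an eigenvalue $\alpha\in\mathbb C$ (elementary divisors $(\alpha+\lambda)^m$), and the Jordan blocks at $\lambda=\infty$. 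The complete strict-equivalence invariants are thus the column minimal indices, the row minimal indices, and the finite and infinite elementary divisors.

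Next I would exploit skew-symmetry. Since $(A+\lambda B)^t=-(A+\lambda B)$, transposing the Kronecker decomposition produces a strictly equivalent pencil; as transposition interchanges $L_\epsilon\leftrightarrow L_\epsilon^t$ and fixes the elementary divisors, the whole list of invariants must be self-dual. Hence the column and row minimal indices coincide, so the singular blocks occur in matched pairs $L_\epsilon\oplus L_\epsilon^t$, and each elementary divisor occurs an even number of times, i.e.\ in pairs $(\alpha+\lambda)^m,(\alpha+\lambda)^m$ (and likewise at infinity). The even multiplicity is a genuine constraint of skew-symmetry --- already the smallest regular skew pencil has its eigenvalue with a \emph{pair} of elementary divisors --- and it is exactly the pairing recorded in the bracketed remark for type $III$.

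The main work, and the step I expect to be the obstacle, is to \emph{lift strict equivalence to strict congruence}: to show that two skew-symmetric pencils with the same Kronecker invariants are in fact strictly $\mathbb C$-congruent, and that each self-dual cluster of Kronecker blocks is congruent to the stated normal form. The strategy is constructive. A matched pair $L_\epsilon\oplus L_\epsilon^t$ assembles, via the skew pairing, into a single indecomposable block of \emph{odd} size $2\epsilon+1$, which after an explicit change of basis is $M_{I,2k+1}$ with $k=\epsilon$ (the common radical of $A$ and $B$ is the degenerate case $k=0$, displayed separately as the zero summand); a pair of infinite Jordan blocks assembles into $M_{II,2l}$ of size $2l$; and a pair of finite Jordan blocks for $\alpha$ assembles into $M_{III,2m}$ of size $2m$ with root $\alpha$. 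For each type one writes the symmetrizing congruence $P$ explicitly --- essentially pairing each block with minus the transpose of its dual partner and applying a fixed reversal of coordinates --- and checks that the resulting skew-symmetric pencil reproduces the prescribed Kronecker invariants. Uniqueness of the decomposition, together with the crucial fact that over the algebraically closed field $\mathbb C$ no extra congruence invariants (such as the signs that appear in the real case of \cite{T}) survive, follows from a Krull--Schmidt and cancellation argument in the category of skew-symmetric pencils. This cancellation step is the crux, and it is precisely where both $\operatorname{char}\neq 2$ and algebraic closedness are used.

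Finally I would record the rank assertions by inspecting the displayed matrices directly. For type $III$ one reads off $\det B_{III}\neq0$, so $B_{III}$ has full rank $2m$, while $A_{III}$ loses rank exactly when the corner entry $\alpha$ vanishes, giving rank $2m$ for $\alpha\neq0$ and $2m-2$ for $\alpha=0$; this is consistent with the elementary divisor pair $(\alpha+\lambda)^m$. The analogous inspection of type $I$ gives $A_I$ and $B_I$ of rank $2k$ each, and of type $II$ gives $A_{II}$ of rank $2l$ and $B_{II}$ of rank $2l-2$, matching the claimed ranks and completing the correspondence between the three canonical types and the singular, finite, and infinite parts of the pencil.
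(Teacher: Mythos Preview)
The paper does not prove this theorem at all: it is quoted from the literature, specifically Thompson~\cite{T} (Section~5) with notation adapted from Lancaster--Rodman~\cite{lancaster} (Section~5), and the only original content is the appended rank observations. So there is no ``paper's own proof'' to compare against; the authors treat the classification as a known input.

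That said, your outline is essentially the standard route taken in those references: reduce to Kronecker's canonical form under strict equivalence, use $(A+\lambda B)^t=-(A+\lambda B)$ to force the minimal indices and elementary divisors to pair up, and then upgrade strict equivalence to strict congruence block by block. Your identification of the three types with, respectively, a matched singular pair $L_\epsilon\oplus L_\epsilon^t$, a pair of infinite Jordan blocks, and a pair of finite Jordan blocks at $\alpha$ is correct, and the rank check at the end is exactly how the paper justifies its rank comments. The step you flag as the obstacle---showing that skew pencils with the same Kronecker data are actually congruent, and that no further invariants appear over $\mathbb C$---is indeed where the real work lies; Thompson handles it by explicit construction of the symmetrizing $P$ for each block type together with a cancellation argument, so your sketch is honest about the difficulty but would need those details filled in to stand on its own. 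One small correction: the zero summand in the statement is not the $k=0$ case of type~$I$ (which would be a $1\times 1$ block) but rather the common null space contributed by minimal indices equal to zero, which can have arbitrary dimension.
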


\newsubsection

\subsection{Quadratic forms over the complex numbers}

Any quadratic form $\tilde f=\sum_{1\leq i<j\leq n} a_{ij}x_ix_j\in E_n$ can be written as the product ${\bf x}^t\tilde A{\bf x}$, where ${\bf x}=(x_i)_{i=1}^n\in{\mathbb{C}}^n$ is a (vertical) vector and $\tilde A=(\frac{a_{ij}}{2})\in \mathbb{ C}^{n\times n}$ is a skew-symmetric matrix, that is, $a_{ji}=-a_{ij}$. If $P^t\tilde AP= A$ for some nonsingular matrix $P\in \mathbb{C}^{n\times n}$, then by change of variables ${\bf x}=P{\bf y}$, we have ${\bf x}^t\tilde A{\bf x}={\bf y}^t A{\bf y}$.

\begin{theorem} \label{thm:Cstructure}
Let $\tilde f,\tilde g\in E_{n}$ be two generic quadratic forms. Then, after a proper change of variables, we can write them as: 
\begin{enumerate}
\item $f= \sum_{i=1}^{k} x_ix_{k+1+i}\; and\; g= \sum_{i=1}^{k} x_ix_{k+i},\; if\; n=2k+1;$

\item  $f= \sum_{i=1}^k \alpha_ix_{2i-1}x_{2i}\; and\;  g= \sum_{i=1}^k x_{2i-1}x_{2i}\; with\; nonzero\; \alpha_i\; such\; that\; \alpha_i\ne \alpha_j\; for\; i\ne j,\; if\; n=2k.$
\end{enumerate}
\end{theorem}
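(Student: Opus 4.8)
The plan is to deduce Theorem~\ref{thm:Cstructure} from the structure theory of skew-symmetric matrix pencils recorded in Theorem~\ref{thm:Ctypes}, by tracking which canonical blocks can appear for a \emph{generic} pair of forms. Concretely, I would start by writing $\tilde f = \mathbf{x}^t \tilde A \mathbf{x}$ and $\tilde g = \mathbf{x}^t \tilde B \mathbf{x}$ with $\tilde A, \tilde B$ skew-symmetric, and observe that a change of variables $\mathbf{x} = P\mathbf{y}$ replaces the pencil $\tilde A + \lambda \tilde B$ by the strictly congruent pencil $P^t(\tilde A + \lambda \tilde B)P$, simultaneously transforming $\tilde f$ and $\tilde g$. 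So it suffices to identify, for generic $\tilde A, \tilde B$, the canonical direct-sum decomposition of $\tilde A + \lambda \tilde B$ from Theorem~\ref{thm:Ctypes}, and then read off the corresponding quadratic forms block by block.

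The key step is the genericity analysis. First, genericity forces $\tilde B$ (and $\tilde A$) to have maximal possible rank: this is $n$ if $n$ is even and $n-1$ if $n$ is odd, since a skew-symmetric matrix always has even rank. Next I would argue that genericity rules out type $II$ and the $\alpha = 0$ sub-case of type $III$, and also forces all type $I$ blocks to have the minimal size $2k+1 = 1$ (i.e.\ to be the trivial $1\times 1$ zero block, contributing nothing) — in short, the only blocks that survive are: in the odd case, a single type $I$ block of size $n = 2k+1$ together with $k$ blocks of type $III$ with $m=1$; in the even case, $k$ blocks of type $III$ with $m=1$. The cleanest way to see this is via the Kronecker/Weierstrass invariants: type $I$ blocks correspond to the minimal indices (the singular part of the pencil), and a generic pencil has the smallest possible singular part, i.e.\ just the jump needed to account for the parity of $n$; type $II$ and the degenerate type $III$ encode repeated/nilpotent elementary divisors of a constrained shape, whereas a generic pencil has $\det(\tilde A + \lambda \tilde B)$ with $\lfloor n/2\rfloor$ distinct roots (each appearing with multiplicity $2$, forced by skew-symmetry), which is exactly the data of $\lfloor n/2\rfloor$ type $III$ blocks with $m=1$ and pairwise distinct $\alpha_i$. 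In the even case the $\alpha_i$ are all nonzero because $\det \tilde B \neq 0$, and distinct by genericity.

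Finally I would translate the surviving canonical blocks into forms. A type $III$ block with $m=1$ has $A_{III} = \begin{pmatrix} 0 & \alpha \\ -\alpha & 0\end{pmatrix}$ and $B_{III} = \begin{pmatrix} 0 & 1 \\ -1 & 0\end{pmatrix}$, which on the pair of variables for that block gives the summands $\alpha_i x_{2i-1}x_{2i}$ and $x_{2i-1}x_{2i}$ (up to the usual factor of $2$ absorbed into the bilinear-form convention); summing over the $k$ blocks gives precisely case~(2). In the odd case, the extra type $I$ block of size $2k+1$ contributes, after the standard reindexing of its $2k+1$ variables, the pair $\sum_{i=1}^{k} x_i x_{k+1+i}$ and $\sum_{i=1}^{k} x_i x_{k+i}$, yielding case~(1); here the $\alpha$-parameters from the (non-existent) type $III$ blocks play no role because, with $n = 2k+1$, the full rank $2k$ of $\tilde A$ and $\tilde B$ is already used up by the single type $I$ block, so no type $III$ block is present at all.

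The main obstacle I anticipate is making the genericity argument rigorous rather than heuristic: one must show that the locus of pairs $(\tilde A, \tilde B)$ whose pencil has a type $II$ block, a degenerate ($\alpha=0$) type $III$ block, a type $I$ block of size $>1$, or a repeated value among the $\alpha_i$, is a proper Zariski-closed subset of the space of pairs of skew-symmetric matrices. The cleanest route is to exhibit, for each forbidden feature, a single pair $(\tilde A, \tilde B)$ avoiding it (e.g.\ the direct sum of $k$ type $III$ blocks with generic distinct nonzero $\alpha_i$, plus a type $I$ block of size $1$ in the odd case), since the canonical-form invariants — ranks of $\tilde A + \lambda\tilde B$ for the various $\lambda$, degrees of elementary divisors, minimal indices — are upper/lower semicontinuous, so their generic values are the extreme ones realized by that example. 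Everything else is bookkeeping with the block shapes in Theorem~\ref{thm:Ctypes}.
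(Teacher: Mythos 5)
Your overall strategy --- reduce to the canonical skew-symmetric pencil blocks of Theorem~\ref{thm:Ctypes}, argue by semicontinuity from a witness example that the generic pencil realizes the extremal invariants, and translate the surviving blocks back into forms --- is exactly the paper's, and your treatment of the even case (regular pencil, Pfaffian with $k$ simple roots forcing $k$ blocks of type $III$ of size $2$ with pairwise distinct nonzero $\alpha_i$) matches the paper's argument. One small slip there: $\alpha_i\neq 0$ follows from $\det\tilde A\neq 0$, not from $\det\tilde B\neq 0$ (an $\alpha=0$ block makes $A_{III}$ degenerate, while $\det\tilde B\neq 0$ is what excludes type $II$ blocks).

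The odd case, however, contains a genuine error and is internally inconsistent. In one place you assert that genericity forces every type $I$ block to be the trivial $1\times 1$ zero block and that a generic pencil has ``the smallest possible singular part,'' and your proposed witness for odd $n$ is ``$k$ type $III$ blocks plus a type $I$ block of size $1$''; in another place you (correctly) say the odd canonical form is a single type $I$ block of size $2k+1$ with no type $III$ blocks at all. These cannot both hold, and it is the first description that is wrong: already for $n=3$ the decomposition ``one $1\times 1$ zero block $\oplus$ one $M_{III,2}$'' yields $f=\alpha x_2x_3$ and $g=x_2x_3$, which are proportional --- plainly not generic. The truth is the opposite of your heuristic: for odd $n$ the generic pencil is \emph{entirely} singular, a single $M_{I,2k+1}$ with minimal index $k$. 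The invariant that detects this (and that your witness fails to certify) is not the ranks of $A$ and $B$ separately --- both candidate decompositions give $\operatorname{rank}A=\operatorname{rank}B=2k$ --- but the rank of $A+\lambda B$ for \emph{every} $\lambda$: generically it equals $2k$ for all $\lambda$ (equivalently, the $2k\times 2k$ sub-Pfaffians of $A+\lambda B$, as polynomials in $\lambda$, have no common root), which only the single block $M_{I,2k+1}$ achieves, whereas your witness drops to rank $2k-2$ at $\lambda=-\alpha_i$. This is precisely the ``additional condition to maximize the rank of $A+\lambda B$ as well'' that the paper imposes. With that correction the remaining bookkeeping in your proposal goes through.
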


\begin{proof}
Let $\tilde A$ and $\tilde B$ be the matrices that correspond to $\tilde f$ and $\tilde g$. By  Theorem~\ref{thm:Ctypes} and the discussion above we can write $\langle \tilde f,\tilde g\rangle = ( {\bf y}^t A{\bf y},{\bf y}^t B{\bf y})$, where $A+\lambda B = P^t(\tilde A+\lambda\tilde B)P$ is a direct sum of canonical diagonal blocks. The ranks of $\tilde A$ and $A$ are the same, as well as the ranks of $\tilde B$ and $B$.  We know that a matrix corresponding to a generic quadratic form has maximal rank. Hence, we will examine which diagonal sums of canonical pairs $A,B$ have the same property, with the additional condition to maximaze the rank of $A+\lambda B$ as well.



The odd and even case differ due to the fact that the rank of a skew-symmetric matrix is always even, so that for odd $n$ the matrix pencil $A+\lambda B$ is always singular. If $n$ is even, the case to consider is when the matrix pencil is strictly regular, that is, when both $A$ and $B$ are nonsingular.

{ \ \ ({\it i}).} Let $n=2k+1$. We see that for all $\lambda$ the matrix pencil $M_{I, 2k+1}$ has maximal rank $2k$ as well as each of the included terms $A_I$ and $B_I$. Hence, the skew-symmetric matrices $\tilde A, \tilde B$, corresponding to generic quadratic forms, having maximal rank are strictly $\mathbb C$-congruent to  $A_I$ and $B_I$ in $M_{I, 2k+1}$.

Thus, $f = 2(y_1y_{2k}+\cdots +y_{k}y_{k+1})$ and $g = 2(y_1y_{2k+1}+\cdots +y_{k}y_{k+2})$. The result follows from $y_i=x_i/\sqrt 2$ for $1\leq i\leq k$ and $y_{i}=x_{3k+2-i}/\sqrt 2$ for $k+1\leq i\leq 2k+1$.

{ \ \ ({\it ii}).} Let $n=2k$. For full rank $\tilde A+\lambda\tilde B$ must be strictly $\mathbb C$-congruent to $\oplus_{i=1}^r M_{III, 2m_i}$  with $\sum_{i=1}^r 2m_i =2k$, where every $\alpha_i\ne 0$ is a finite root of an elementary divisor pair $(\alpha_i+\lambda)^{2m_i}$ of $\det(M_{III, 2m_i})$. Moreover, two regular pencils are strictly congruent if and only if they have the same elementary divisors (e.g. see ch. II, § 2 in \cite{gantmacher}). The determinant of a skew-symmetric matrix is the square of its Pfaffian, so every elementary divisor of $\det (A+\lambda B)$ has even multiplicity. We will show that every root of the Pfaffian Pf$(\tilde A+\lambda \tilde B)$, and therefore also of Pf$(A+\lambda B)$, is simple.


Let $M^{\alpha_i}_{III,2}$ have $\alpha_i$ as its finite root and consider $M=\oplus_{i=1}^k M^{\alpha_i}_{III, 2}$ with all different nonzero $\alpha_i$'s. 
Then Pf($M$) has only simple roots,  a generic matrix pencil then also must have only simple roots



Since every root of Pf$(\tilde A+\lambda\tilde B)$ is simple, the elementary divisors of $\det (\tilde A+\lambda\tilde B)=\det (A+\lambda B)$ have multiplicity exactly two. By Theorem \ref{thm:Ctypes} the matrix pencil is strictly congruent to a diagonal sum of $k$ matrices of type $M_{III,2}$ with distinct $\alpha_i$'s. 

We notice that the rank of $M=\oplus_{i=1}^k M^{\alpha_i}_{III, 2}$ is $2k$ unless $\lambda$ is equal to some $\alpha_i$, in which case the rank is $2k-2$.

Thus, $f = 2(\sum_{i=1}^k \alpha_iy_{2i-1}y_{2i})$ and  $g = 2(\sum_{i=1}^k y_{2i-1}y_{2i})$. The result follows from $y_i=x_i/\sqrt 2$. 
\end{proof}

\newsection

\section{The Hilbert series of the ideal generated by two generic quadratic forms in $E_n$} \label{sec:comb}

Let $n=2k+1$.
By Theorem \ref{thm:Cstructure}, 
if follows that the question about the minimal Hilbert series for quadratic generic forms in $E_n$ is now equivalent to determining the Hilbert series of $E_n/\lideal f,g\rideal$, where 
$f = x_1 x_{k+1} + x_2 x_{k+2}+ \ldots +x_{k} x_{2k}$ and $g= x_1 x_{k+2}+ x_2 x_{k+3}+ \ldots +x_{k} x_{2k+1}$. We get an immediate result.

\begin{proposition} \label{prop:computer}
Conjecture \ref{conj:paths} is correct for odd $n \leq 19$.
\end{proposition}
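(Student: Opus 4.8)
The plan is to verify Conjecture~\ref{conj:paths} by direct computation. By Theorem~\ref{thm:Cstructure}, finding the minimal Hilbert series among all pairs of generic quadratic forms in $E_n$ with $n = 2k+1$ odd is the same as computing the Hilbert series of $E_n/\lideal f, g \rideal$ for the two explicit canonical forms $f = x_1 x_{k+1} + \cdots + x_k x_{2k}$ and $g = x_1 x_{k+2} + \cdots + x_k x_{2k+1}$. Since the coefficients of these forms are rational (indeed $0$ or $1$), the computation can be carried out exactly over $\mathbb{Q}$ by a computer algebra system, with no issue of choosing random coefficients or worrying about genericity degenerating in small characteristic.

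Concretely, I would fix each odd $n$ with $3 \le n \le 19$, build the exterior algebra $E_n$ (for instance as the quotient of a polynomial or noncommutative free algebra by the relations $x_i^2 = 0$ and $x_i x_j = -x_j x_i$, or using a built-in exterior-algebra constructor in Macaulay2, Singular, or a similar system), form the (two-sided) ideal $\lideal f, g \rideal$, and ask the system for the Hilbert series of the quotient. One then compares the output degree by degree against $1 + a(n,1)t + \cdots + a(n, \lceil n/2 \rceil) t^{\lceil n/2 \rceil}$, where $a(n,s)$ is the lattice-path count from Conjecture~\ref{conj:paths}; the numbers $a(n,s)$ themselves are easy to evaluate directly from their combinatorial definition (or via the standard reflection-principle formula for constrained paths), so this comparison is entirely mechanical. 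The claim is then simply that the two sequences agree for every odd $n$ in the stated range.

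The main practical obstacle is computational cost rather than conceptual difficulty: for $n = 19$ the algebra $E_{19}$ has dimension $2^{19}$ and the Gr\"obner-basis or linear-algebra computation of the quotient's Hilbert series in the noncommutative (but finite-dimensional) setting can be memory-intensive. This can be mitigated by working degree by degree — in each degree $d$ one only needs the ranks of the multiplication maps $E_{n,d-2}^{\oplus 2} \to E_{n,d}$ induced by $f$ and $g$ together with their images, which are linear-algebra problems over $\mathbb{Q}$ of manageable size — and by exploiting the sparsity of $f$ and $g$. Because everything is over $\mathbb{Q}$ and exact, the resulting verification is rigorous, not heuristic, so Proposition~\ref{prop:computer} follows once the computation terminates and the outputs match.
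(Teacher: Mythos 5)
Your proposal matches the paper's proof: the authors likewise invoke Theorem~\ref{thm:Cstructure} to reduce to the explicit canonical pair $f = x_1x_{k+1}+\cdots+x_kx_{2k}$, $g = x_1x_{k+2}+\cdots+x_kx_{2k+1}$ and then verify the coefficients by a Macaulay2 computation (recorded in Table~\ref{table:odd}). The approach and justification are essentially identical, so this is correct.
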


\begin{proof}
See the coefficients in Table~\ref{table:odd}, obtained by calculations in Macaulay2 \cite{M2}. 
\end{proof}

At first sight, one might suspect that it is possible to determine a Gr\"obner basis for the ideal $\lideal f,g \rideal$ with respect to a suitable monomial ordering, and then determine the Hilbert series. However, we have not had any success with this approach yet. 
Instead, we turn to the combinatorial methods, mentioned in the introduction, in order to bound the Hilbert series. 
\begin{remark}
In the case when $n$ is even, 
one of the two canonical forms contains generic coefficients, so we cannot draw similar conclusions as in the odd case. However, due to results that we derive in Proposition \ref{prop:postulation} and Proposition \ref{prop:equal}, we can determine almost all of the coefficients for small odd $n$. See Table \ref{table:even}.

\end{remark}

\begin{table}[h!]
\begin{tabular}{|c|c|c|c|c|c|c|c|c|c|c|c|}
\hline
$n\backslash s$ & $0$ & $1$ & $2$ & $3$ & $4$ & $5$ & $6$ & $7$ & $8$ & $9$ & $10$ \\
\hline
$3$ 		& $1$ 	& $3$ 	& $1$ 	& $0$ 	& $0$	&$0$	 	& $0$	&$0$		&$0$		&$0$	 	&$0$		\\
$5$ 		& $1$ 	& $5$ 	& $8$ 	& $1$ 	& $0$	&$0$	 	& $0$	&$0$		&$0$		&$0$	 	&$0$		\\
$7$		& $1$	& $7$	& $19$	& $21$ 	& $1$       &$0$	 	& $0$	&$0$		&$0$		&$0$	 	&$0$		\\
$9$ 		& $1$ 	& $9$	&$34$ 	& $66$ 	& $55$	 & $1$ 	& $0$	&$0$		&$0$		&$0$	 	&$0$		\\
$11$ 	& $1$ 	& $11$	& $53$ 	& $143$ 	& $221$  	& $144$ 	& $1$	&$0$		&$0$		&$0$	 	&$0$		\\			
$13$    	& $1$	& $13$	& $76$ 	& $260$ 	& $560$ 	& $728$ 	& $377$ 	&$1$		&$0$		&$0$	 	&$0$		\\		
$15$   	& $1$ 	& $15$	& $103$  	& $425$ 	& $1156$	& $2108$	& $2380$	&$987$	&$1$		&$0$	 	&$0$		\\
$17$   	& $1$	& $17$	& $134$	& $646$	& $2109$	& $4845$ & $7752$	&$7753$	&$2584$	&$1$		&$0$		\\
$19$   	& $1$	& $19$	& $169$	& $931$	& $3535$	& $9709$	& $19551$&$28101$&$25213$&$6765$	&$1$ 	\\
\hline
 \end{tabular}

 \caption{The coefficients $a(n,s)$ for odd $n \leq 19$ agree with the values of the Hilbert function of $E_n$ modulo the ideal generated by two generic quadratic forms, see Proposition \ref{prop:computer}.}  \label{table:odd}

 \begin{tabular}{|c|c|c|c|c|c|c|c|c|c|c|c|} 
 \hline
 $n\backslash s$ & $0$ & $1$ & $2$ & $3$ & $4$ & $5$ & $6$ & $7$ & $8$ & $9$ & $10$ \\
\hline
$4$	& $\bf{1}$ & $\bf{4}$ & $\bf{4}$ & $\bf{0}$ & $\bf{0}$	&$\bf{0}$	   	& $\bf{0}$	&$\bf{0}$		&$\bf{0}$		&$\bf{0}$	 	&$\bf{0}$	 \\
$6$	 & $\bf{1}$ & $\bf{6}$	 & $\bf{13}$ &	$\bf{8}$	&$\bf{0}$	   	& $\bf{0}$	&$\bf{0}$		&$\bf{0}$		&$\bf{0}$	 	&$\bf{0}$ &$\bf{0}$	 \\
$8$ 		&  	$\bf{1}$	& $\bf{8}$	& $\bf{26}$	& $\bf{40}$	& $\bf{16}$	&$\bf{0}$	   	& $\bf{0}$	&$\bf{0}$		&$\bf{0}$		&$\bf{0}$	 	&$\bf{0}$		\\
$10$ 	&	$\bf{1}$	& $\bf{10}$ 	& $\bf{43}$	& $\bf{100}$	& $\bf{121}$	& $\bf{32}$	& $\bf{0}$	&$\bf{0}$		&$\bf{0}$		&$\bf{0}$		&$\bf{0}$ 	\\
$12$ 	&	$\bf{1}$	&$\bf{12}$	& $\bf{64}$	& $\bf{196}$	& $\bf{364}$	& $\bf{364}$	& $\bf{64}$	&$\bf{0}$		&$\bf{0}$		&$\bf{0}$		&$\bf{0}$		\\
$14$    	& 	$\bf{1}$	&$\bf{14}$	& $\bf{89}$	& $\bf{336}$	& $\bf{820}$	& $\bf{1288}$	& $1093$	&$\bf{128}$	&$\bf{0}$		&$\bf{0}$		&$\bf{0}$ 	\\
$16$   	& $\bf{1}$	&$\bf{16}$   	& $\bf{118}$ 	& $\bf{528}$   &$\bf{1581}$	& $\bf{3264}$	& $\bf{4488}$	& $3280$	&$\bf{256}$	&$\bf{0}$		&$\bf{0}$		\\
$18$		& $\bf{1}$	&$\bf{18}$	& $\bf{151}$	& $\bf{780}$	&$\bf{2755}$	& $\bf{6954}$  & $\bf{12597}$ & $\bf{15504}$ &  $9841$  &  $\bf{512}$ & $\bf{0}$ \\
$20$ 	&  $\bf{1}$ & $\bf{20}$		& $\bf{188}$	& $\bf{1100}$	&$\bf{4466}$	&$\bf{13244}$	&  $\bf{29260}$&$\bf{47652}$ &  $53296$ & $29524$ & $\bf{1024}$ \\
\hline
 \end{tabular}
 \caption{The coefficients $a(n,s)$ for even $n \leq 20$. The bold numbers means that we have equality with the Hilbert function for two generic quadratic forms in $E_n$, according to Proposition \ref{prop:postulation} and Proposition \ref{prop:equal}. Non bold numbers are upper bounds for the Hilbert function.} 
\label{table:even}
 \end{table}

We should mention  that although that it is only in the case when $n$ is odd that we have an explicit description of the generators, the conclusions that we are
about to draw will be as strong in the even case as in the odd case.

\newsubsection

\subsection{Bounds on the Hilbert series}

\subsubsection{An upper bound}
This part consists of the proof of the following result.

\begin{theorem}\label{thm:upper}
Let $E_n$ be the exterior algebra and 
$f, g$ be two  generic quadratic forms. The dimension of the $s$-th graded component of $E_n/\lideal f,g\rideal$ is at most  $a(n,s)$.
\end{theorem}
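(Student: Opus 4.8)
The plan is to reduce the bound to a purely combinatorial statement about a basis of the quotient, working in the odd case $n=2k+1$ where by Theorem~\ref{thm:Cstructure} we have the explicit generators $f = x_1x_{k+1}+\cdots+x_kx_{2k}$ and $g = x_1x_{k+2}+\cdots+x_kx_{2k+1}$; the even case $n=2k$ will follow by a degeneration/semicontinuity argument (specializing the $\alpha_i$ in the type~$M_{III,2}$ form, or else relating $E_{2k}$ to $E_{2k+1}$ by adding a variable), since the Hilbert function of $E_n/\lideal f,g\rideal$ is an upper-semicontinuous function of the coefficients of $f,g$ and thus the generic value is at most any specialized value. So from now on I would fix the odd-$n$ generators above. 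First I would observe that $\dim (E_n/\lideal f,g\rideal)_s$ equals $\binom{n}{s}$ minus the rank of the multiplication map $(f\cdot) \oplus (g\cdot)\colon E_{s-2}\oplus E_{s-2}\to E_s$, and that it suffices to exhibit a \emph{spanning} set of monomials for the quotient in degree $s$ of size $a(n,s)$, i.e. to show that the square-free monomials of degree $s$ outside some explicitly chosen set $\mathcal{B}(n,s)$ with $|\mathcal{B}(n,s)| = a(n,s)$ are all reducible modulo $\lideal f,g\rideal$.

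The key step is to set up the right combinatorial encoding. I would group the $2k+1$ variables into the $k$ "rungs" $\{x_i, x_{k+i}, x_{k+1+i}\}$ — note variable $x_{k+1}$ through $x_{2k+1}$ are shared between consecutive rungs, so really the natural object is a path: think of columns $0,1,\dots,k$ and at column $j$ record which of a bounded number of "states" a square-free monomial occupies, where $f$ and $g$ act as the two rewriting moves that change the column by $\pm 1$ and the height by $+1$. Concretely, reducing a monomial $m$ modulo $f$ means: if $x_i x_{k+i}$ does not divide $m$ but, say, $x_i x_{k+1+i}$ does (or neither), then one of the other $k-1$ terms of $f$ can be used to rewrite $x_i x_{k+i}\cdot(m/\text{stuff})$; iterating produces straightening relations. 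I would make precise a normal form in which a monomial is "reduced" iff its associated lattice path stays inside the rectangle $(n+2-2s)\times(n+2)$ and runs corner to corner with the two allowed steps $(x,y)\mapsto(x\pm1,y+1)$, matching $a(n,s)$ exactly. The bookkeeping that the $2s$ "used" variables and $n-2s$ "unused" variables, together with how many of the shared variables $x_{k+1},\dots,x_{2k+1}$ appear, translate into the width $n+2-2s$ and length $n+2$ of the rectangle is the heart of the argument, and I would verify it by induction on $s$ (and on $k$) using the recursion $a(n,s) = a(n-1,s-1)+a(n-1,s)$-type identities that the path counts satisfy, cross-checked against Table~\ref{table:odd}.

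The main obstacle I anticipate is showing that the chosen set of "reduced" monomials really does span the quotient — that is, that every non-reduced square-free monomial can be rewritten, modulo $\lideal f,g\rideal$, as a combination of reduced ones, with no hidden cycle that would force a reduced monomial to vanish as well (which would only help the bound) or, more dangerously, leave a non-reduced monomial irreducible. This amounts to proving a confluence/termination statement for the straightening relations coming from the $2k-2$ binomial-type consequences of $f$ and $g$, in the absence (as the authors note) of a workable Gröbner basis. I would handle this by a carefully chosen term order or weight on the path statistic — e.g. lexicographic on the sequence of column positions read left to right, so that each application of $f$ or $g$ strictly decreases the statistic until the path lies in the rectangle — and argue termination from boundedness of the rectangle. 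A secondary, more routine obstacle is making the odd-to-even specialization rigorous: I would state it as a lemma that for generic $f,g$ in $E_{2k}$ one has $\operatorname{HF}(E_{2k}/\lideal f,g\rideal)(s) \le \operatorname{HF}(E_{2k+1}/\lideal f',g'\rideal)(s)$ for suitable $f',g'$, either via the inclusion $E_{2k}\hookrightarrow E_{2k+1}$ combined with setting the extra variable's coefficients to zero, or directly via upper-semicontinuity after exhibiting the odd canonical pair as a limit of even ones.
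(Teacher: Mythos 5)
Your plan is, at its core, the Gr\"obner--basis/straightening approach: choose one term order, show that every monomial whose path leaves the rectangle is a leading monomial of the ideal in degree $s$, and conclude that the in-rectangle monomials span the quotient. This is exactly the approach the paper flags as not (yet) workable, and the step you defer to ``a carefully chosen term order or weight on the path statistic'' is the entire difficulty. The obstruction is concrete: the monomials whose paths cross $x=0$ (the set $L_{n,s}$) are leading monomials of $\lideal f\rideal_s$ for degrevlex with $x_1\succ\cdots\succ x_n$, while those crossing $x=n+2-2s$ (the set $R_{n,s}$) are leading monomials of $\lideal g\rideal_s$ for the \emph{opposite} degrevlex order; the two families of reductions push paths in opposite directions, and no single order is known under which both sets consist of leading monomials of $\lideal f,g\rideal_s$. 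Without that, your rewriting system has no termination guarantee, and the spanning claim is unproved. The paper's proof circumvents this precisely: it keeps the two incompatible orders, rescales $f$ to $f^{(\mathbf{q})}$ with a rapidly decaying weight vector $\mathbf{q}$ so that the minor of the multiplication matrix indexed by $L_{n,s}$ dominates all competing minors, and then shows by a Laplace-expansion estimate that the combined matrix built from $f^{(\mathbf{q})}$ and $g$ has a nonvanishing minor indexed by $L_{n,s}\cup R_{n,s}$, giving $\dim\lideal f,g\rideal_s\geq \vert L_{n,s}\cup R_{n,s}\vert$ and hence the bound $a(n,s)$ by semicontinuity. This quantitative linear-algebra step is the missing idea in your proposal.

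A secondary problem is your treatment of even $n$. Semicontinuity says the generic Hilbert function is at most that of \emph{any} specialization, so to bound $E_{2k}$ you must exhibit a specific pair in $E_{2k}$ whose quotient you can bound by $a(2k,s)$ --- but the even canonical pair still contains $k$ generic parameters $\alpha_i$, and passing through $E_{2k+1}$ gives you information about generic forms there (a lower bound for specializations such as forms not involving $x_{2k+1}$), which is the wrong direction. In the paper no parity split is needed: the proof of the upper bound only uses that $f$ and $g$ each generate a principal ideal with the expected Hilbert series after restriction to initial or final segments of the variables (Lemmas~\ref{lem:left} and~\ref{lem:right}), which holds for generic quadrics in every $n$ by the Moreno-Soc\'ias--Snellman result.
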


For the proof of the upper bound, we need a couple of lemmas. We will work with a certain kind of lattice paths from $(0,0)$ to $(n+2-2s,n+2)$ and a bijection between these paths and monomials of degree $s$.

A generally admissible path of size $(n,s)$ consists of $n+2$ steps, numbered  from 0,  with the $j$-th step going from $(i,j)$ to either $(i+1,j+1)$ (right move) or to  $(i-1,j+1)$ (left move). Moreover, the 0-th and the last step goes always to the right, so there will be exactly $s$ left steps.

Given such a path $p$ we define the monomial $m_p:=x_{j_1}x_{j_2}\cdots x_{j_s}$, where $j_1,\ldots,j_s$ are the indices of the left steps (see Figure ~\ref{pic:paths}). Clearly there is a one-to-one correspondence between the  set of all the paths of size $(n,s)$ and the monomials of degree $s$ in $E_n$, which we denote by $B_{n,s}$.  
{\begin{figure}[htb!]
\label{thm5}
\centering
\includegraphics[scale=1.2]{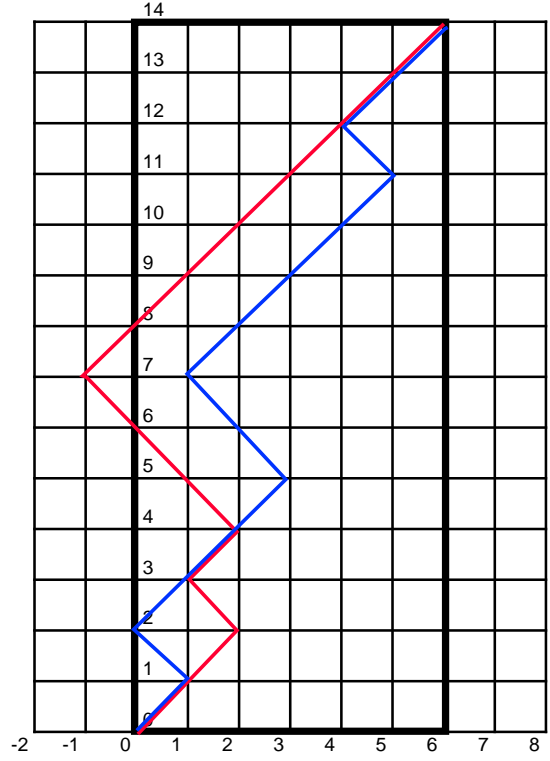}
\caption{ $(n,s)=(12,4)$. The red path corresponds to the monomial  $\color{red}{ x_2x_4x_5x_6}\in L_{12,4}$ and the blue one to the monomial $\color{blue}{ x_1x_5x_6x_{11}}\in B_{12, 4}\backslash \{ L_{12,4}\cup  R_{12,4} \}$. }
\label{pic:paths}
\end{figure}
}\

Let the subset $L_{n,s}\subset B_{n,s}$ (respectively $R_{n,s}\subset B_{n,s}$) consist of monomials of degree $s$,  whose corresponding  paths always cross the line $x=0$ (respectively   $x=n+2-2s$). Note that some  paths in $L_{n,s}$ cross the line $x=n+2-2s$ as well, which is  inevitable if, for example, $n+2\leq 2s$.

\begin{lemma}\label{lem:card}
The cardinalities of $|L_{n,s}|$ and of $|R_{n,s}|$ are given by
$$\vert L_{n,s}|=|R_{n,s}|=\begin{cases}
    0,& \text{if } s=1 \text{ or } 0  \\  
    \binom{n}{s-2},& \text{if } 2\leq 2s \leq n+2\\
    \binom{n}{s},& \text{if } 2s\geq n+2.
\end{cases}$$
\end{lemma}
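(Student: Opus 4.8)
First I would set up the bijection between monomials of degree $s$ and generally admissible paths more explicitly, and understand what the condition ``the path crosses the line $x=0$'' means combinatorially. A path starts at $(0,0)$, takes a forced right step to $(1,1)$, then makes a sequence of right/left moves with exactly $s$ left moves total, and ends with a forced right step into $(n+2-2s, n+2)$. Saying the path ``crosses $x=0$'' means that at some intermediate time the $x$-coordinate becomes $\le 0$, i.e. (since steps are $\pm 1$ and we start at height $x=1$ after the forced step) the path touches $x=0$ at some point. I would first handle the two trivial cases $s=0$ (the unique path never leaves $x$ between $0$ and $n+2$, stays positive, so $L_{n,0}=\varnothing$) and $s=1$ (one left step among otherwise right steps cannot bring the path down to $x=0$ before the end, since after the forced opening step we are at $x=1$ and a single $-1$ followed by $+1$'s keeps $x\ge 1$ until... actually one needs to check the endpoint, but $n+2-2\cdot 1=n\ge 1$, so no crossing), giving $0$.

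The heart of the argument is a reflection-principle computation. For the main case I would argue as follows. Drop the forced first and last steps and think of the path as a walk of length $n$ starting at $x=1$ and ending at $x=n+1-2s$ with steps $\pm 1$ and exactly $s$ down-steps; ``crossing $x=0$'' becomes ``the walk reaches $0$''. By the standard reflection principle, the number of such walks from $1$ to $n+1-2s$ that touch $0$ equals the number of unrestricted walks from $-1$ to $n+1-2s$, i.e. walks of length $n$ from $-1$ to $n+1-2s$. Such a walk has $u$ up-steps and $d$ down-steps with $u+d=n$ and $-1+u-d=n+1-2s$, hence $u=n+1-s$, $d=s-1$, wait — recomputing: $u-d = n+2-2s$, together with $u+d=n$ gives $u=n+1-s$ and $d=s-1$; but we also need $d\le n$ and $u\le n$, i.e. $1\le s\le n+1$. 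So the count is $\binom{n}{s-1}$... this does not match $\binom{n}{s-2}$, so I have mis-set up the endpoints by the forced steps, and the careful bookkeeping of \emph{exactly where} the forced opening/closing steps sit — and whether ``cross'' means ``$\le 0$'' strictly or ``$<0$'' — is precisely the delicate point. I would redo this with the convention pinned down so that the reflected walk is forced to \emph{end} at a reflected endpoint, yielding $\binom{n}{s-2}$; the shift by $2$ in the index strongly suggests that both the first step and the crossing contribute a reflection, or equivalently that the relevant walk has length $n$ and the reflected target is shifted by $2$ relative to the down-step count, producing $\binom{n}{s-2}$.

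For the regime $2s \ge n+2$: here the endpoint $x=n+2-2s\le 0$, so the path is \emph{forced} to cross $x=0$ regardless — hence $L_{n,s}=B_{n,s}$, and $|B_{n,s}|=\binom{n}{s}$ since it is all monomials of degree $s$ in $E_n$. This gives the third case immediately and also pins down the boundary behaviour at $2s=n+2$, where the two formulas $\binom{n}{s-2}$ and $\binom{n}{s}$ must agree — a useful sanity check ($s-2 = n+2-2-(s) $ hmm, at $2s=n+2$ we have $n=2s-2$, so $\binom{n}{s-2}=\binom{2s-2}{s-2}=\binom{2s-2}{s}=\binom{n}{s}$, consistent). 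Finally, $|L_{n,s}|=|R_{n,s}|$ follows from the symmetry $(x,y)\mapsto(n+2-2s-x,y)$ of the lattice rectangle, which swaps the lines $x=0$ and $x=n+2-2s$ and maps generally admissible paths to generally admissible paths bijectively (it preserves the forced first/last right steps as forced steps, in the flipped direction).

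\textbf{Main obstacle.} The only real subtlety is getting the reflection-principle count to land on $\binom{n}{s-2}$ rather than $\binom{n}{s-1}$ or $\binom{n}{s}$: one must be scrupulous about the two forced boundary steps (they are not free choices, so they shift the effective walk length and endpoints), about whether ``cross'' includes merely \emph{touching} $x=0$, and about the resulting parity/range constraints. Everything else — the trivial cases, the ``forced crossing'' regime, and the $L\leftrightarrow R$ symmetry — is routine once the conventions are fixed.
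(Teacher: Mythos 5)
Your overall architecture — the $L\leftrightarrow R$ symmetry, the trivial cases $s=0,1$, the forced-crossing regime $2s\ge n+2$, and a reflection-principle count in the middle regime — is reasonable, and the first three pieces essentially match the paper. But the one nontrivial computation is left with an acknowledged wrong answer, and that is a genuine gap: you arrive at $\binom{n}{s-1}$, note the mismatch, and then assert without derivation that more careful bookkeeping will produce $\binom{n}{s-2}$. The point you fail to pin down is exactly the one you flag: what \emph{crossing} $x=0$ means. The paper's Figure 1 settles it — the blue path touches $x=0$ (at height $2$) yet is explicitly \emph{not} in $L_{12,4}$, while the red path reaches $x=-1$ and is — so crossing means attaining $x\le -1$. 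Hence the reflection must be taken about the line $x=-1$, not $x=0$: the free part of the walk has length $n$, starts at $x=1$, ends at $x=n+1-2s$, and the walks that reach $x=-1$ are in bijection with unrestricted length-$n$ walks from the reflected start $x=2(-1)-1=-3$ to $x=n+1-2s$; solving $u+d=n$ and $u-d=(n+1-2s)-(-3)=n+4-2s$ gives $d=s-2$, i.e.\ $\binom{n}{s-2}$ (with the degenerate endpoint $2s=n+2$, where the terminal free position is already $-1$, checked separately or via your boundary identity). A second, smaller flaw: your symmetry map $(x,y)\mapsto(n+2-2s-x,y)$ does not send paths to paths — it moves the start to $(n+2-2s,0)$ and turns the forced right steps into left steps, changing the number of left steps from $s$ to $n+2-s$. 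You need the central (point) symmetry $(x,y)\mapsto(n+2-2s-x,\,n+2-y)$ used in the paper, which reverses time as well and therefore preserves both admissibility and the count of left steps while swapping the two boundary lines.

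For comparison, the paper sidesteps the reflection principle entirely: it conditions on the last two steps of a path in $L_{n,s}$ to obtain the Pascal-type recursion $|L_{n,s}|=|L_{n-1,s}|+|L_{n-1,s-1}|$ and inducts on $s$, the base cases being exactly your trivial and forced-crossing regimes. Your route, once the convention is fixed as above, is shorter and arguably more transparent; the paper's induction avoids having to be precise about the meaning of ``cross'' beyond what the recursion's boundary cases require.
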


\begin{proof} There is a natural bijection between the paths in $L_{n,s}$ and $R_{n,s}$, as one can see by considering the central symmetry at the point $(\frac{n+2-2s}{2},\frac{n+2}{2})$. Hence, $|L_{n,s}|=|R_{n,s}|$. In the rest of the proof we will only work with $L_{n,s}$.

The simple cases are $s=0$, where there are no left moves, and $s=1$ with exactly one left move, where the path cannot cross any vertical line. The case $n+2\le 2s$ follows easily, as any path must cross both  $x=0$ and $x=n+2-2s$, so there are no restrictions on how the left moves may occur. 

We use induction on $s$ in the remaining second case, after first fixing $n$. 
The cases $s=1$ and $2s=n+2$ (if $n$ is even) are already handled above, where $\binom{n}{-1}=0$ and $\binom{n}{\frac{n+2}{2}}=\binom{n}{\frac{n+2}{2}-2}$ respectively. Let $2<2s<n+2$ and consider the last two steps. There are only two possibilities: 
$$  {\rm either}\; \ldots \rightarrow  (n-2s,n)\rightarrow (n+1-2s,n+1) \rightarrow (n+2-2s,n+2)$$
$$ {\rm or}\; \ldots \rightarrow  (n+2-2s,n)\rightarrow (n+1-2s,n+1) \rightarrow (n+2-2s,n+2).$$
It is clear that the number of paths of the first type is $|L_{n-1, s}|$. 

To calculate the number of paths of the second type, we replace the $n$-th step that goes to the left  
with a step to the right, that is, 
$$(n+2-2s,n)\rightarrow (n+1-2(s-1),n+1).$$ It follows then that the number of paths of the second type is $|L_{n-1,s-1}|.$
Hence, we obtain the formula $|L_{n,s}|=|L_{n-1,s}|+|L_{n-1,s-1}|.$

From $2<2s<n+2$ we know $2\leq 2(s-1)<2s\leq(n-1)+2$. By the induction assumption, we get $|L_{n-1,s}|=\binom{n-1}{s-2}$ and $|L_{n-1,s-1}|=\binom{n-1}{s-3}$, and finally
 $$|L_{n,s}|=\binom{n-1}{s-2}+\binom{n-1}{s-3}=\binom{n}{s-2},$$
 where the the last equality is the well known recursive formula for binomial coefficients.
\end{proof}


\begin{lemma}\label{lem:left}
Let $f$ be a quadratic form such that, for any $s'$ and $n'\leq n$,  $$\dim\left( \lideal f \rideal_{s'}\biggr\rvert_{x_{n'+1}=\ldots=x_n=0}\right)=\min\left(\binom{n'}{s'}, \binom{n'}{s'-2}\right).$$

Let $\succ$ be the graded reverse lexicographical ordering (degrevlex) induced by 
$x_1\succ x_2\succ\ldots\succ x_n$. 
Then $L_{n,s}$ is the set of leading monomials of the $s$-th graded component of $\lideal f \rideal$.
\end{lemma}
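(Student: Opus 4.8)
\textbf{Proof plan for Lemma~\ref{lem:left}.}

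The plan is to show the two inclusions between $L_{n,s}$ and the set of leading monomials of $\lideal f\rideal_s$ by a dimension count combined with an explicit exhibition of leading terms. First I would observe that, by the hypothesis together with Lemma~\ref{lem:card} applied with $(n',s')=(n,s)$, the dimension of $\lideal f\rideal_s$ equals $\min\bigl(\binom{n}{s},\binom{n}{s-2}\bigr)$, which is precisely $|L_{n,s}|$ in all three regimes (it is $0$ for $s\le 1$, $\binom{n}{s-2}$ for $2\le 2s\le n+2$, and $\binom{n}{s}$ for $2s\ge n+2$, matching the cases of Lemma~\ref{lem:card}). Hence it suffices to prove that \emph{every} monomial $m_p$ with $p\in L_{n,s}$ \emph{is} a leading monomial of some element of $\lideal f\rideal_s$, because then the set of leading monomials, having cardinality $\dim\lideal f\rideal_s=|L_{n,s}|$, must coincide with $L_{n,s}$.

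For that direction I would proceed by reverse induction on the degrevlex order, building, for each $p\in L_{n,s}$, an element of $\lideal f\rideal_s$ whose leading monomial is $m_p$. The key point is a combinatorial characterisation: $m_p=x_{j_1}\cdots x_{j_s}$ lies in $L_{n,s}$ exactly when the associated path crosses $x=0$, i.e.\ when there is some prefix of the step sequence in which the number of left steps strictly exceeds the number of right steps; translating through the bijection, this says (roughly) that for some $t$ the indices $j_1<\cdots<j_t$ satisfy $j_t\le 2t-1$, equivalently $x_{j_1},\dots,x_{j_t}$ "use up" the first $2t-1$ variables too fast to be avoided by the rank bound. I would then show directly that such a monomial can be obtained as a leading term: write $f=\sum_{i} q_i$ after the normalisation from Theorem~\ref{thm:Cstructure} (for odd $n$, $f=\sum_{i=1}^k x_ix_{k+i}$ or the analogous even form, though the hypothesis of the lemma is stated abstractly so I would only use the dimension input), and multiply $f$ by a suitable monomial so that all but one term vanishes in $E_n$ (squares kill most products) and the surviving term is $m_p$; the crossing condition is exactly what guarantees such a "killing" monomial of degree $s-2$ exists.

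Alternatively — and this is probably the cleaner route — I would avoid producing explicit ideal elements and instead argue by restriction. Use the specialisation map setting $x_{n'+1}=\cdots=x_n=0$: for the path $p$, if $j_s$ is the largest index appearing in $m_p$, then $m_p$ lies in $E_{j_s}$ and the crossing-of-$x=0$ property is intrinsic to the "reduced" path of size $(j_s, s)$ or $(j_s\!-\!1,s)$. An induction on $n$ then reduces the claim to a base case, with the inductive step handled exactly as in the proof of Lemma~\ref{lem:card} (the splitting of paths according to the last two steps, $|L_{n,s}|=|L_{n-1,s}|+|L_{n-1,s-1}|$), matched against the additive behaviour of ideals under the restriction $x_n=0$ — namely $\lideal f\rideal|_{x_n=0} + x_n\cdot(\lideal f\rideal : x_n)|_{x_n=0}$ accounts for all of $\lideal f\rideal$. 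The main obstacle, and the step I would spend the most care on, is precisely this last bookkeeping: matching the recursion for $L_{n,s}$ with a recursion for the leading-term set of $\lideal f\rideal$ under $x_n=0$, since one must verify that the degrevlex order interacts correctly with the restriction (monomials not divisible by $x_n$ keep their relative order, and $x_n$ is the smallest variable so it is the one that "drops out" in a Gröbner-friendly way) and that the hypothesis of the lemma is inherited by the restricted form $f|_{x_n=0}$, which it is, since the hypothesis already quantifies over all $n'\le n$.
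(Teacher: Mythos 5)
Your reduction to showing that every $m_p$ with $p\in L_{n,s}$ is a leading monomial is exactly the paper's closing step: by the hypothesis and Lemma~\ref{lem:card}, $\dim\lideal f\rideal_s=\min\bigl(\binom{n}{s},\binom{n}{s-2}\bigr)=|L_{n,s}|$, so the inclusion $L_{n,s}\subseteq\lm(\lideal f\rideal_s)$ forces equality. The gap is in that inclusion, and neither of your two routes closes it. The first route cannot work as described: a monomial $r$ of degree $s-2$ can annihilate all but one term of the canonical form $f=\sum_i c_ix_{2i-1}x_{2i}$ only when $s-2\geq k-1$, and even the weaker statement that $m_p=\lm(fr)$ for some \emph{monomial} $r$ is false. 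For example $m_p=x_3x_4x_5x_6$ lies in $L_{n,4}$ (the path reaches $x=-1$ after step $6$), but every monomial multiple $f\cdot r$ that contains $x_3x_4x_5x_6$ in its support also contains the strictly larger monomial $x_1x_2x_5x_6$ or $x_1x_2x_3x_4$; one genuinely needs non-monomial multipliers, and producing them requires the dimension hypothesis rather than term-by-term cancellation. (Also your crossing criterion $j_t\le 2t-1$ is off by one: crossing $x=0$ means reaching $x=-1$, i.e.\ $j_t\le 2t-2$ for some $t$; with $j_t\le 2t-1$ you count paths that merely touch the axis and the cardinality $\binom{n}{s-2}$ would be wrong.)

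The second route is closer in spirit but leaves open precisely the essential step you flag as the "main obstacle": the recursion $|L_{n,s}|=|L_{n-1,s}|+|L_{n-1,s-1}|$ would have to be matched against a decomposition of $\lm(\lideal f\rideal_s)$ into the part not divisible by $x_n$ and $x_n$ times leading monomials of the colon ideal $\lideal f\rideal:x_n$, and the hypothesis of the lemma gives no control over that colon ideal, so the second summand of the recursion is not reachable by induction. The paper's argument avoids all of this with one observation missing from your proposal: given $m_p\in L_{n,s}$, locate the first step $(-1,n'+1)\to(0,n'+2)$ where the path returns to the axis from $x=-1$, and truncate $p$ there to get $m_{p'}$ of degree $s'=\frac{n'+2}{2}$ supported on $x_1,\dots,x_{n'}$. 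At this particular $n'$ one has $\binom{n'}{s'}=\binom{n'}{s'-2}$, so the hypothesis says the restricted ideal is \emph{all} of $(E_{n'})_{s'}$; hence $m_{p'}=\lm(fr')$ for some $r'$, degrevlex compatibility with setting the smallest variables to zero lifts this to $E_n$, and multiplying by $m_p/m_{p'}$ yields $\lm\bigl(fr'\tfrac{m_p}{m_{p'}}\bigr)=m_p$. That truncation at the crossing point, exploiting the coincidence of the two binomial coefficients, is the idea your proof needs.
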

\begin{remark} A generic quadratic form and any canonical form $$c_1x_1x_2+c_2x_3x_4+\ldots+c_{\lfloor\frac{n}{2}\rfloor}x_{2\lfloor\frac{n}{2}\rfloor-1}x_{2\lfloor\frac{n}{2}\rfloor}, \textrm{ where } 0\neq c_i\in \mathbb{C}$$ 
satisfies the condition of Lemma~\ref{lem:left}, see \cite{morenosnellman}.
\end{remark}

\begin{proof}
First we want to prove that for any monomial $m_p\in L_{n,s}$, where $p$ is the corresponding path, there is a form $r$ of degree $s-2$, such that $m_p=\lm(fr)$. 

Let $n'$ be such an index that the path $p$ contains the step $(-1,n'+1)\rightarrow (0,n'+2)$. There is at least one such $n'$, because $p$ crosses $x=0$. 
Define the part of $p$ from $(0,0)$ to $(0,n'+2)$ as  the path $p'$. Note that $n'$ is even and $s':=\frac{n'+2}{2}$ is the degree of $m_{p'}$. Now we set to zero all the variables with index ${n'+1}$ and larger: $x_{n'+1}=x_{n'+2}=\ldots=x_n=0.$

By assumption on $f$, we have 
$$\dim\left( \lideal f \rideal_{s'}\biggr\rvert_{x_{n'+1}=\ldots=x_n=0}\right)=\min\left(\binom{n'}{s'}, \binom{n'}{s'-2}\right).$$ Since $\binom{n'}{s'}= \binom{n'}{s'-2},$ this means  $\lideal f \rideal$ contains everything in degree $s'$. 
Hence, there is an $r'$ of degree $s'-2$ such that $$\lm\left(fr'\biggr\rvert_{x_{n'+1}=\ldots=x_n=0}\right)=m_{p'}.$$ Hence, with respect to degrevlex we obtain $\lm(fr')=m_{p'}.$  Then we define $r:=r'\frac{m_p}{m_{p'}}$, so that $\lm(fr)=m_{p}.$

We have proved that $L_{n,s}$ is a subset of the leading monomials. The equality follows from
$$\dim\left(\lideal f\rideal_{s}\right)=\min\left(\binom{n}{s}, \binom{n}{s-2}\right)=\begin{cases}
    \binom{n}{s},& \text{if } s\geq \frac{n+2}{2}\\
    \binom{n}{s-2},& \text{if } s\leq \frac{n+2}{2},
\end{cases}$$
as $\dim\left(\lideal f\rideal_{s}\right) = |L_{n,s}|$ by Lemma \ref{lem:card}. We conclude that $L_{n,s}$ is the set of the leading monomials of $\lideal f\rideal_{s}$.
\end{proof}

In a similar way we get the following result.

\begin{lemma}\label{lem:right}
Let $g$ be a quadratic form such that, for any $s'$ and $n'\leq n$,  $$\dim\left( \lideal g \rideal_{s'}\biggr\rvert_{x_{1}=\ldots=x_{n-n'}=0}\right)=\min\left(\binom{n'}{s'}, \binom{n'}{s'-2}\right).$$

Let $\succ$ be the degrevlex ordering induced by $x_n\succ x_{n-1}\succ\ldots\succ x_1$. 
Then $R_{n,s}$ is the set of leading monomials of the $s$-th graded component of $\lideal g\rideal$.
\end{lemma}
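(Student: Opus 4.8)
The plan is to mimic the proof of Lemma~\ref{lem:left} verbatim, with the roles of the two ends of the path interchanged and the monomial ordering reversed accordingly. So first I would observe that the central symmetry of the rectangle $(n+2-2s)\times(n+2)$ about the point $(\frac{n+2-2s}{2},\frac{n+2}{2})$ used in the proof of Lemma~\ref{lem:card} sends the family $L_{n,s}$ bijectively to $R_{n,s}$; under the induced relabelling of variables $x_i \mapsto x_{n+1-i}$ this is exactly the symmetry that swaps the hypothesis ``$x_{n'+1}=\cdots=x_n=0$'' of Lemma~\ref{lem:left} for the hypothesis ``$x_1=\cdots=x_{n-n'}=0$'' stated here, and swaps degrevlex with $x_1\succ\cdots\succ x_n$ for degrevlex with $x_n\succ\cdots\succ x_1$. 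Hence the statement is literally the image of Lemma~\ref{lem:left} under this relabelling, and nothing new is required.

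If instead one wants a direct argument (perhaps preferable to ``in a similar way''), I would run the same two steps. Step one: for each $m_p\in R_{n,s}$, pick an index $n'$ such that the path $p$ contains the step $(n+3-2s,\,n'+1)\to(n+2-2s,\,n'+2)$ crossing the line $x=n+2-2s$; this $n'$ exists because $p\in R_{n,s}$, and reading the tail of $p$ from $(n+2-2s,n'+2)$ back gives a sub-path whose left steps all have index $>n'$ while the head determines a monomial $m_{p'}$ of degree $s':=\frac{n'+2}{2}$ in the variables $x_{n-n'+1},\dots,x_n$ only. Setting $x_1=\cdots=x_{n-n'}=0$ and invoking the hypothesis on $g$ together with $\binom{n'}{s'}=\binom{n'}{s'-2}$, the ideal $\lideal g\rideal$ is the whole space in that degree, so there is $r'$ of degree $s'-2$ with $\lm(g r'|_{x_1=\cdots=x_{n-n'}=0})=m_{p'}$; since these are the degrevlex-largest variables, $\lm(gr')=m_{p'}$, and then $r:=r'\,\frac{m_p}{m_{p'}}$ gives $\lm(gr)=m_p$. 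Step two: the reverse inclusion follows from
$$\dim\left(\lideal g\rideal_s\right)=\min\left(\binom{n}{s},\binom{n}{s-2}\right)=|R_{n,s}|,$$
the last equality being Lemma~\ref{lem:card}; so $R_{n,s}$ is exactly the set of leading monomials of $\lideal g\rideal_s$.

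The only point that needs a little care — and it is the same subtlety flagged after the statement of Lemma~\ref{lem:card} — is that when $2s\ge n+2$ a path in $R_{n,s}$ may also cross $x=0$, so $R_{n,s}$ and $L_{n,s}$ need not be disjoint and the chosen crossing index $n'$ need not be unique; but the argument only needs the existence of one suitable $n'$, so this causes no trouble. A second bookkeeping check is that $\frac{m_p}{m_{p'}}$ is a genuine monomial, i.e. $m_{p'}\mid m_p$ in $E_n$, which holds because the left steps of $p'$ are a subset of the left steps of $p$ (those are disjoint as index sets, so no repeated variable appears and the exterior product is nonzero). I expect no genuine obstacle here: the whole content is the observation that Lemma~\ref{lem:left} is symmetric under reversing the variable order, and the rest is the verification that the hypotheses match up under that reversal.
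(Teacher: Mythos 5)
Your first paragraph is exactly the paper's proof: the authors dispose of Lemma~\ref{lem:right} in one line by applying Lemma~\ref{lem:left} after the change of variables $x_i\rightarrow x_{n+1-i}$ (which swaps the two substitution hypotheses, the two degrevlex orders, and $L_{n,s}\leftrightarrow R_{n,s}$ via the central symmetry). The optional direct argument you sketch has a small head/tail bookkeeping wobble (for $R_{n,s}$ the relevant sub-path is the \emph{tail} of $p$ after its last return to the line $x=n+2-2s$, whose left steps lie in the high index range), but since the symmetry argument already suffices this does not affect correctness.
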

\begin{remark} A generic quadratic form and any canonical form $$ d_1x_nx_{n-1}+d_2x_{n-2}x_{n-3}+\ldots+d_{\lfloor\frac{n}{2}\rfloor}x_{n+2-2\lfloor\frac{n}{2}\rfloor}x_{n+1-2\lfloor\frac{n}{2}\rfloor}, \textrm{ where } 0\neq d_i\in \mathbb{C}$$
satisfies the condition of Lemma~\ref{lem:left}.
\end{remark}

\begin{proof}
Apply Lemma~\ref{lem:left} after change of variables $x_i\rightarrow x_{n+1-i}$ and $g\rightarrow f$.
\end{proof}

 We are now ready to prove the main statement.

\begin{proof}[Proof of Theorem~\ref{thm:upper}]
The key idea of the proof is  to consider two different monomial orderings on $B_{n,s}$, so that the leading monomials of $\lideal f\rideal_s$ in the first ordering and the leading monomials of $\lideal g\rideal_s$ in the second ordering correspond to the monomials whose paths intersect $x=0$ and $x=n+2-2s$ respectively.

Let $f=\sum_{1\leq i\leq j\leq n}c_{i,j} x_{i}x_{j}$ be a quadratic form satisfying the condition of Lemma~\ref{lem:left}. We choose a subset $A_s^{(f)}\subseteq B_{n,s-2}$, such that $\vert A_s^{(f)}\vert=\vert L_{n,s}\vert$ and 
$${\rm dim}\big({\rm span}\{ f\alpha\,\vert\,\ {\alpha\in A_s^{(f)}}\}\big)=\vert L_{n,s}\vert={\rm dim}( \lideal f \rideal_s).$$ Hence, 
 ${\rm \lm\big( span} \{ f\alpha\}_{\alpha\in A_s^{(f)}}\big)=L_{n,s}$ with respect to degrevlex in Lemma \ref{lem:left}.

To a vector ${\bf t}=(t_1,\ldots, t_n)\in \mathbb{C}^n$ we assign the form 
$$f^{( \bf t)}=\sum_{1\leq i\leq j\leq n}t_it_jc_{i,j} x_{i}x_{j},$$
and to a monomial $\gamma=x_{i_1}x_{i_2}\cdots x_{i_l}$ the number $t_\gamma=t_{i_1}t_{i_2}\cdots t_{i_l}.$ 

Now consider the product $f^{(\bf t)}\alpha$ for each  $\alpha\in A_s^{(f)}$, and define a matrix $M_{f^{(\bf t)}}$ of size $\vert A_s^{(f)}\vert\times \vert B_{n,s}\vert$, where the rows consist of the coefficients of $\beta\in B_{n,s}$ in $f^{(\bf t)}\alpha$. We define the matrix $M_f$ similarly for the products $f\alpha$. Given $\beta\in B_{n,s}$, we see the $(\alpha,\beta)$-entry of $M_{f^{(\bf t)}}$ is equal to $\frac{t_{\beta}}{t_{\alpha}}$ multiplied with the $(\alpha,\beta)$-entry of $M_{f}$. Hence, we have an equation between the main minors of $M_{f^{( \bf t)}}$ and $M_f$. Namely, for a subset $L'\subseteq B_{n,s}$ with $\vert L'\vert=\vert L_{s,n}\vert$, we have
$$\det( M_{f^{( \bf t)},L'}) =\det( M_{f,L'})\frac{\prod_{\beta\in L'} t_{\beta}}{\prod_{\alpha\in A_s^{(f)}} t_{\alpha}}.$$

In order to proceed we first define a sufficiently large constant $C$, that uses a constant $C_1\in \mathbb{N}$ defined in the end of this proof. Let $$C:= \left(n^{2n}+ \max_{\substack{L'\subset B_{n,s}:\\  \vert L'\vert=\vert L_{n,s}\vert}}  \frac{\vert\det(M_{f,L'})\vert}{\vert\det(M_{f,L_{n,s}})\vert} \right) C_1.$$ 

Now consider a vector ${\bf q}=(C^{-2C}, C^{-2C^2},\ldots, C^{-2C^n}).$ For monomials $\beta_1,\beta_2\in B_{n,s}$, we know that $\beta_1\prec \beta_2$ implies $C^2q_{\beta_1}\leq q_{\beta_2}$. 
Since $L_{s,n}= {\rm \lm}(fB_{n,s-2}),$ then for any subset $L_{n,s}\neq L'\subset B_{n,s}$, such that $\vert L'\vert =\vert L_{n,s}\vert$, we have either
$$\vert \det(M_{f,L'})\vert=0\;\quad {\rm  or}\;\quad \frac{\prod_{\beta\in L_{n,s}} t_{\beta}}{\prod_{\beta\in L'} t_{\beta}}\geq C^2.$$

The above together with $\vert\det(M_{f,L'})\vert<C\vert\det(M_{f,L_{n,s}})\vert$ yields the inequality 
\begin{equation}
\label{eq:det}
\vert \det(M_{f^{( \bf t)}, L_{n,s}})\vert > C \vert \det(M_{{f^{( \bf t)}},L'})\vert.
\end{equation}

Let $g$ be a quadratic  form satisfying the condition of Lemma~\ref{lem:right}. Similarly as above, there is a subset $A_s^{(g)}\subset B_{n,s-2}$, so that, with respect to degrevlex in Lemma~3,  Lm(span$\{g\alpha\,\vert\,\alpha\in A_s^{(g)}\})=R_{n,s}$. The matrix $M_g$ is defined by $g$ in the corresponding way as $M_f$ by $f$. 

We know that the main minor $\det(M_{g,R_{n,s}})$ is non-zero. Thus there is subset  $\bar{A}_s^{(g)}\subseteq A_s^{(g)}$ of size $|R_{n,s}\setminus L_{n,s}|$ such that the determinant of the minor $\bar{A}_s^{(g)}\times (R_{n,s}\setminus L_{n,s})$ does not vanish. Further, let $M_g'$ be a submatrix of $M_g$, corresponding to $\bar{A}_s^{(g)}\times B_{n,s}$. 

We construct a big matrix $\mathcal{M}$ of size $(|A_s^{(f)}|+|\bar{A}_s^{(g)}|) \times |B_{n,s}|$, where the first $|A_s^{(f)}|$ rows come from $M_{f^{({\bf q})}}$ and the last $|\bar{A}_s^{(g)}|$ rows are from $M_g'$. Since $|A_s^{(f)}|+|\bar{A}_s^{(g)}|=|L_{n,s}\cup R_{n,s}|$, it is enough to show that the main minor $\det(\mathcal{M}_{L_{n,s}\cup R_{n,s}})$ does not vanish.
We can rewrite the minor as
$$\det(\mathcal{M}_{L_{n,s}\cup R_{n,s}})=\sum_{\substack{L'\subset L_{n,s}\cup R_{n,s}:\\  |L'|=|L_{n,s}|}} \pm \det(M_{f^{(q)},L'})\cdot\det({M_{g, \{ L_{n,s}\cup R_{n,s}\}\setminus L'}}).$$
Due to Equation~\eqref{eq:det}, it is possible to choose a sufficiently large $C_1$, such that 
\begin{multline}
|\det(M_{f^{({\bf q})},L_{n,s}})|\cdot |\det({M_{g,  R_{n,s}\setminus L_{n,s}}})|>\\ 
\left(\frac{|\det(M_{f^{({\bf q})},L_{n,s}})|}{C}\right)
\left(\sum_{\substack{L_{n,s}\neq L'\subset L_{n,s}\cup R_{n,s}:\\  |L'|=|L_{n,s}|}}   |\det({M_{g, \{ L_{n,s}\cup R_{n,s}\}\setminus L'}})|\right)>\\
\sum_{\substack{L_{n,s}\neq L'\subset L_{n,s}\cup R_{n,s}:\\  |L'|=|L_{n,s}|}}  |\det(M_{f^{({\bf q})}L'})|\cdot |\det({M_{g, \{ L_{n,s}\cup R_{n,s}\}\setminus L'}})|.
\end{multline}

Altogether, for a large $C_1$ the main minor of $\mathcal{M}_{L_{n,s}\cup R_{n,s}}$ will not vanish, which means that the dimension of the $s$-graded component of the ideal generated by $\lideal f^{({\bf t})}, g\rideal$ is at least $|L_{n,s}\cup R_{n,s}|$. Thus, the dimension of the quotient algebra is at most $a(n,s)$. This concludes the proof.
\end{proof}

\subsubsection{A lower bound}

We recall the following result, which is the exterior version of the lower bound used by Fr\"oberg in the commutative setting.  
\begin{lemma}
Let $f$ and $g$ be generic quadratic forms in $E_n$. Then $$\HS(E_n/\lideal f,g\rideal,t) \geq \left[(1+t)^n(1-t^2){^2}\right].$$
\end{lemma}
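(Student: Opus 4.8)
The plan is to imitate Fr\"oberg's original coefficient-wise lower bound argument, transported to the exterior setting, so I will exhibit a sequence of ``semi-continuity'' specializations together with one explicit choice of $f$ and $g$ for which the multiplication maps have at least the expected rank. Concretely, fix $n$ and let $R=(1+t)^n(1-t^2)^2$; write $r_s$ for the $s$-th coefficient of the truncated series $[R]$. Since $\HS(E_n/\langle f,g\rangle,t)$ is, coefficient by coefficient, a lower-semicontinuous function of the coefficients of $(f,g)$ (the dimension of a graded piece of the quotient can only drop under specialization, because it is the corank of a matrix whose entries are polynomials in the coefficients of $f,g$), it suffices to produce \emph{one} pair of quadratic forms $f_0,g_0$ for which the Hilbert series of $E_n/\langle f_0,g_0\rangle$ is coefficient-wise $\geq [R]$. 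Then for generic $f,g$ the series is $\geq$ that of the specialization $\geq [R]$.

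Next I would build the special pair from the principal-ideal result. By Moreno-Soc\'ias--Snellman \cite{morenosnellman}, for a single generic even form $f$ one has $\HS(E_n/\langle f\rangle,t)=[(1+t)^n(1-t^2)]$, and in fact (as recorded in the remark after Lemma~\ref{lem:left}) any canonical form $c_1x_1x_2+\cdots$ already realizes this. So the multiplication map $\cdot f\colon (E_n)_{s-2}\to (E_n)_s$ has rank $\min\big(\binom{n}{s},\binom{n}{s-2}\big)$ for every $s$. I would then argue that, after choosing $g$ generically \emph{relative to} $f$ (equivalently, choosing $f,g$ to be a generic point of the affine space of pairs), the induced map
$$
\cdot g\colon \big(E_n/\langle f\rangle\big)_{s-2}\longrightarrow \big(E_n/\langle f\rangle\big)_s
$$
has maximal possible rank $\min\!\big(\dim(E_n/\langle f\rangle)_{s-2},\ \dim(E_n/\langle f\rangle)_s\big)$ \emph{provided} this is consistent with $[R]$; the point is that one only needs the map to be injective or surjective in each degree in the range dictated by the truncation. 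Combining the two maps gives, in each degree $s$ below the truncation point,
$$
\dim\big(E_n/\langle f,g\rangle\big)_s \;=\; \dim\big(E_n/\langle f\rangle\big)_s - \mathrm{rank}\big(\cdot g\big)_s \;\geq\; r_s,
$$
which is exactly the claimed bound $[R]$; beyond the truncation point there is nothing to prove since $[\,\cdot\,]$ truncates at the first non-positive value and dimensions are non-negative.

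The main obstacle is the middle step: producing a pair $(f,g)$ for which the second multiplication map $\cdot g$ on $E_n/\langle f\rangle$ genuinely has the maximal rank compatible with $[R]$. Unlike the commutative Fr\"oberg setting, where one can appeal to the Koszul-type resolution or to known base cases, here there is no a~priori reason the exterior analogue of ``generic is as good as possible'' holds — indeed the whole paper is devoted to the fact that the naive guess $[(1+t)^n(1-t^2)^2]$ is \emph{not} the true series, only a lower bound. The cleanest route I see is to pick $g$ so that on $E_n/\langle f\rangle$ multiplication by $g$ factors through a map whose rank can be estimated from below by a direct, explicit witness: e.g. take $f$ a canonical form on the first batch of variables and $g$ a canonical form on a shifted batch, and track a spanning set of monomials surviving in the quotient, showing that $g$ times an appropriately chosen subset of degree-$(s-2)$ monomials spans a subspace of the required dimension. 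This reduces to a purely combinatorial rank computation on monomials, which is the kind of bookkeeping that should be routine once set up but is the only genuinely non-formal ingredient; the semicontinuity reduction and the bookkeeping of which degrees lie below the truncation point are straightforward.
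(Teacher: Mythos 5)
You have the direction of the key inequality backwards, and as a result you have manufactured a difficulty that is not there. To bound $\dim(E_n/\lideal f,g\rideal)_s$ from \emph{below} you need to bound $\operatorname{rank}(\cdot g)_s$ from \emph{above}, and the trivial bound suffices: the rank of any linear map $(E_n/\lideal f\rideal)_{s-2}\to(E_n/\lideal f\rideal)_s$ is at most $\min\bigl(\dim(E_n/\lideal f\rideal)_{s-2},\dim(E_n/\lideal f\rideal)_s\bigr)$. Combined with your (correct) identity $\dim(E_n/\lideal f,g\rideal)_s=\dim(E_n/\lideal f\rideal)_s-\operatorname{rank}(\cdot g)_s$ and the Moreno-Soc\'ias--Snellman series for $E_n/\lideal f\rideal$, this already gives
$$\dim(E_n/\lideal f,g\rideal)_s\;\ge\;\max\bigl(0,\;\dim(E_n/\lideal f\rideal)_s-\dim(E_n/\lideal f\rideal)_{s-2}\bigr)\;\ge\;[R]_s$$
for \emph{every} $g$, with no genericity, no semicontinuity, and no explicit witness pair. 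This is exactly the paper's two-line proof.

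The step you single out as ``the only genuinely non-formal ingredient'' --- producing a pair for which $\cdot g$ has \emph{maximal} rank --- is therefore not needed; worse, as a general statement it is false, and the whole paper is built around its failure. For $n=5$ the map $\cdot g\colon (E_5/\lideal f\rideal)_1\to(E_5/\lideal f\rideal)_3$ is a map between two $5$-dimensional spaces of rank only $4$, which is precisely why the true series is $1+5t+8t^2+t^3$ rather than $[(1+t)^5(1-t^2)^2]=1+5t+8t^2$. Maximal rank is what one would need for the \emph{upper} bound (equality with $[R]$), which is the hard and generally false half. Since you leave that step as a sketch, the proposal as written is incomplete --- but the repair is simply to delete it and keep the trivial rank estimate that is already implicit in your displayed chain of inequalities. (The only remaining bookkeeping is the elementary check that $[Q]_s-[Q]_{s-2}\ge [R]_s$ for $Q=(1+t)^n(1-t^2)$, which both you and the paper treat as immediate.)
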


\begin{proof}  
From \cite{morenosnellman}, we know that $\HS(E_n/\lideal f\rideal,t) = [(1+t)^n(1-t^2)].$ The multiplication map $\cdot g$ from $(E_n/\lideal f\rideal)_i$ to $(E_n/\lideal f\rideal)_{i+2}$ has rank at most 
$$\min\big(\dim (E_n/\lideal f\rideal)_i{}, \dim (E_n/\lideal f\rideal)_{i+2}{}\big).$$
Thus, $$\dim \left(E_n/\lideal f,g\rideal \right)_{i+2} \geq \max\big(0, {\dim} (E_n/\lideal f\rideal)_{i+2} - {\dim}(E_n/\lideal f\rideal)_{i}{}\big),$$ that is, $$\HS(E_n/\lideal f,g\rideal,t) \geq \left[(1+t)^n(1-t^2){^2}\right].$$
\end{proof} 
\newsubsection

\subsection{Coefficients in the Hilbert series}
Using the bounds that we have derived in the previous section, we can determine a majority of the coefficients in the Hilbert series. 

\begin{proposition} \label{prop:postulation}

Let $f$ and $g$ be two generic quadratic forms in $E_n$.

\begin{itemize}
\item 
If $n$ is odd and equal to $2k+1$, then 
$$\dim((E_{2k+1}/\lideal f,g\rideal)_{k+1}) = a(2k+1,k+1) = 1,$$

and 
$$\dim((E_{2k+1}/\lideal f,g\rideal)_{k+2}) = a(2k+1,k+2) = 0.$$

\item
If $n$ is even and equal to $2k$, then
$$\dim((E_{2k}/\lideal f,g\rideal)_k) = a(2k,k) = 2^k,$$
and 
$$\dim((E_{2k}/\lideal f,g\rideal)_{k+1}) = a(2k,k+1) = 0.$$
\end{itemize}

\end{proposition}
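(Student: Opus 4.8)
The plan is to combine the upper bound of Theorem~\ref{thm:upper} with a matching lower bound obtained either from the Fröberg-type estimate (the lemma just proved) or from a direct dimension count, so the four claimed equalities follow from an inequality in each direction. First I would dispose of the purely combinatorial identities $a(2k+1,k+1)=1$, $a(2k+1,k+2)=0$, $a(2k,k)=2^k$, $a(2k,k+1)=0$ by inspecting the lattice-path description in Conjecture~\ref{conj:paths}. For $s=k+2$ with $n=2k+1$ the rectangle is $(n+2-2s)\times(n+2)=(-1)\times(n+2)$, which is degenerate, so $a(2k+1,k+2)=0$; similarly for $n=2k$, $s=k+1$ the width is $n+2-2s=0$, and the only lattice path in a $0\times(n+2)$ strip using $\pm1$ horizontal moves and forced rightward first/last steps is impossible once $s\geq 1$, so $a(2k,k+1)=0$. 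For $s=k+1$, $n=2k+1$ the width is $1$, and a path in a $1\times(n+2)$ strip from corner to corner is forced at every interior step (it must zig-zag between $x=0$ and $x=1$), giving exactly one path, hence $a(2k+1,k+1)=1$. For $s=k$, $n=2k$ the width is $2$ and a short generating-function or direct bijection argument (paths in a width-$2$ strip of height $2k+2$ are counted by $2^k$) gives $a(2k,k)=2^k$; alternatively this is $|L_{2k,k}\cup R_{2k,k}|$ computed via Lemma~\ref{lem:card}, since $\binom{2k}{k-2}+\binom{2k}{k-2}$ adjusted for the overlap telescopes — I would double-check which of these is cleanest.

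Next, the upper bound $\dim((E_n/\lideal f,g\rideal)_s)\leq a(n,s)$ is immediate from Theorem~\ref{thm:upper} for all four cases. It remains to produce the lower bounds. The two vanishing statements ($a(2k+1,k+2)=0$ and $a(2k,k+1)=0$) need no lower bound — a dimension is automatically $\geq 0$ — so those are done once the combinatorics is checked. For the two positive statements I would invoke the lower bound lemma: $\HS(E_n/\lideal f,g\rideal,t)\geq[(1+t)^n(1-t^2)^2]$ coefficient-wise. So it suffices to check that the $t^{k+1}$-coefficient of $(1+t)^{2k+1}(1-t^2)^2$ is $\geq 1$, and the $t^k$-coefficient of $(1+t)^{2k}(1-t^2)^2$ is $\geq 2^k$, \emph{and} that truncation (the $[\cdot]$ operation) does not kill these coefficients, i.e.\ that no earlier coefficient of the bound series is non-positive. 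The relevant coefficient of $(1+t)^n(1-t^2)^2=(1+t)^n(1-2t^2+t^4)$ in degree $s$ is $\binom{n}{s}-2\binom{n}{s-2}+\binom{n}{s-4}$; I would verify that for $n=2k+1,\ s=k+1$ this equals $\binom{2k+1}{k+1}-2\binom{2k+1}{k-1}+\binom{2k+1}{k-3}$ and argue it is positive (it should in fact be relatively small but provably $\geq 1$ — likely one can show it equals $1$ directly using the central-binomial symmetry $\binom{2k+1}{k+1}=\binom{2k+1}{k}$), and for $n=2k,\ s=k$ that $\binom{2k}{k}-2\binom{2k}{k-2}+\binom{2k}{k-4}\geq 2^k$. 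If the Fröberg bound turns out to be too weak (e.g.\ it gives less than $2^k$ in the even case, which is quite possible since $2^k$ is exponentially small compared with $\binom{2k}{k}$ but the alternating combination could dip below it), I would instead prove the lower bound $\dim\geq 2^k$ directly by exhibiting $2^k$ monomials of degree $k$ that are linearly independent modulo $\lideal f,g\rideal$, using the explicit even-case canonical form $g=\sum x_{2i-1}x_{2i}$, $f=\sum\alpha_i x_{2i-1}x_{2i}$ from Theorem~\ref{thm:Cstructure}: the $2^k$ squarefree monomials picking exactly one variable from each pair $\{x_{2i-1},x_{2i}\}$ are natural candidates, since $f$ and $g$ act on this span in a controlled way.

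The main obstacle I anticipate is precisely this even-case lower bound: proving $\dim((E_{2k}/\lideal f,g\rideal)_k)\geq 2^k$. The Fröberg-type inequality may not be sharp enough, so the real work is a hands-on argument with the canonical pair. The idea would be to set up the multiplication maps $\cdot f$ and $\cdot g$ on the relevant graded pieces in the monomial basis adapted to the pairing $\{x_{2i-1},x_{2i}\}$, observe that on the $2^k$-dimensional space of "transversal" monomials the images of $\lideal f,g\rideal_k$ (coming from degree $k-2$) are forced by skew-commutativity to lie in a proper subspace, and conclude that the transversal monomials remain independent in the quotient. Checking that $\lideal f,g\rideal$ cannot reach all of a transversal monomial's coefficient is the delicate point — one would likely argue by a weight or multidegree grading (assigning each pair a $\mathbb{Z}$-grading) under which $f$ and $g$ are homogeneous but the transversal monomials of degree $k$ sit in a weight where $\lideal f,g\rideal$ has no contribution in degree $k$. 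Once that grading is identified the argument should be short; identifying it correctly is the crux.
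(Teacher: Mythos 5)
Your overall architecture (upper bound from Theorem~\ref{thm:upper}, explicit lattice-path counts for the four values of $a(n,s)$, and a separate lower bound for the two positive statements) is the same as the paper's, and your combinatorial evaluations of $a(2k+1,k+1)=1$, $a(2k+1,k+2)=0$, $a(2k,k)=2^k$, $a(2k,k+1)=0$ are correct. The genuine gap is in your primary route to the positive lower bounds. The Fr\"oberg-type bound $\left[(1+t)^n(1-t^2)^2\right]$ gives nothing at these degrees: for $n=2k+1$, $s=k+1$ the coefficient $\binom{2k+1}{k+1}-2\binom{2k+1}{k-1}+\binom{2k+1}{k-3}$ equals $0$ already for $k=2$ and is negative for $k\ge 3$ (for $k=3$ it is $35-42+1=-6$), so it is not ``provably $\ge 1$'' as you claim; worse, the truncation $[\,\cdot\,]$ kills the series around degree $n/3$ (this is exactly why Proposition~\ref{prop:equal} stops at $s\le\lfloor n/3\rfloor+1$), far below $k+1$. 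The even case fails too for $k\ge 4$: $\binom{8}{4}-2\binom{8}{2}+\binom{8}{0}=15<16=2^4$. So neither positive statement can be extracted from that lemma.

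Your fallback for the even case --- exhibit the $2^k$ transversal monomials $y_1\cdots y_k$ with $y_i$ chosen from the $i$-th pair, and observe that every monomial occurring in $f\alpha$ or $g\alpha$ contains a full pair while a transversal monomial contains none --- is exactly the paper's argument and works; no elaborate weight grading is needed beyond that single observation. What is missing is the analogous (and equally short) argument in the odd case, for which you currently have no working lower bound once the Fr\"oberg route collapses. Using the odd canonical pair from Theorem~\ref{thm:Cstructure}, every term of both $f$ and $g$ contains one of $x_1,\dots,x_k$, so the degree-$(k+1)$ monomial $x_{k+1}x_{k+2}\cdots x_{2k+1}$ cannot occur in any element of $\lideal f,g\rideal$, giving $\dim\bigl((E_{2k+1}/\lideal f,g\rideal)_{k+1}\bigr)\ge 1$. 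With that supplied, your proof closes.
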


\begin{proof}
From the definition of the numbers $a(n,s)$, it follows immediately that $a(2k+1,k+2) = a(2k,k+1) = 0$, proving the second statement in each case. 

We now concentrate on the first statements. We begin with the odd case.
By Theorem  \ref{thm:Cstructure}, we can assume that $f=x_1 x_{2k} + x_2 x_{2k-1} + \cdots + x_{k} x_{k+1} $ and 
$g = x_1 x_{2k+1} + x_2 x_{2k} + \cdots + x_k x_{k+2}$.  Now it is easy to see that $x_{k+1} \cdots x_{2k+1} \notin \lideal f,g\rideal$, so $\dim((E_{2k+1}/\lideal f,g\rideal)_{k+1}) \geq1.$ On the other hand, we know by Theorem \ref{thm:upper} that  $\dim((E_{2k+1}/\lideal f,g\rideal)_{k+1}) \leq a(2k+1,k+1)$. But there is only one path inside the rectangle $1 \times (2k+3)$, and $a(2k+1,k+1) = 1$.

We proceed in the same manner in the even case. By Theorem \ref{thm:Cstructure} we can assume that $f=x_1 x_{k+1} + x_2 x_{k+2} + \cdots + x_k x_{2k} $ and $g=\alpha_1 x_1 x_{k+1} + \alpha_2 x_2 x_{k+2} + \cdots + \alpha_k x_k x_{2k}$. Now we observe that the monomial $y_1 y_2 \cdots y_k$ with $y_i \in \{x_{2i-1},x_{2i}\}$ lies outside $\lideal f,g\rideal$, so $\dim((E_{2k}/\lideal f,g\rideal)_k) \geq 2^k.$ By Theorem \ref{thm:upper} it is enough to show that $a(2k,k) = 2^k$. The possible paths inside the rectangle $2 \times (2k+2)$ are $(0,0) \to (1,1) \to (0/2,2) \to (1,3) \to (0/2,4)\rightarrow \cdots \rightarrow (0/2,2k) \to (1,2k+1) \to (2,2k+2)$, hence, the requested number is indeed $2^k$.\end{proof}

\medskip

\begin{proposition} 
\label{prop:equal}
Let $f,g$ be generic quadratic forms  in $E_n$. Let  $s$ and $n$ be integers such that $s\leq \lfloor \frac{n}{3} \rfloor +1$.  
Then the dimension of the $s$-th graded component of $E_n/\lideal f,g\rideal$ is equal to  $a(n,s)=\binom{n}{s} - 2\binom{n}{s-2} + \binom{n}{s-4}$.
\end{proposition}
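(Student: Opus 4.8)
The plan is to prove the two inequalities $\dim((E_n/(f,g))_s) \le a(n,s)$ and $\dim((E_n/(f,g))_s) \ge a(n,s)$ separately, where the first is already available from Theorem~\ref{thm:upper}. So the real content is the lower bound, i.e.\ showing that $\dim((f,g)_s) \le \binom{n}{s}$, since the formula $a(n,s)=\binom{n}{s}-2\binom{n}{s-2}+\binom{n}{s-4}$ counts lattice paths by inclusion--exclusion (all paths in the $(n+2-2s)\times(n+2)$ strip, minus those crossing $x=0$, minus those crossing $x=n+2-2s$, plus those crossing both), and under the bijection of Section~\ref{sec:comb} the monomials whose paths avoid both lines are precisely $B_{n,s}\setminus(L_{n,s}\cup R_{n,s})$, of which there are $\binom{n}{s}-|L_{n,s}|-|R_{n,s}|+|L_{n,s}\cap R_{n,s}|$; the range $s\le\lfloor n/3\rfloor+1$ is exactly where $|L_{n,s}\cap R_{n,s}|=\binom{n}{s-4}$ by Lemma~\ref{lem:card} applied with the appropriate parameters (so that $2s\le n+2$ and the "crossing both lines'' count is $\binom{n}{s-4}$ rather than something larger). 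First I would verify this combinatorial identity carefully, isolating exactly the inequality on $s$ that makes it valid.

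For the lower bound on the quotient, I would use the two degrevlex orderings from Lemmas~\ref{lem:left} and \ref{lem:right}. From Lemma~\ref{lem:left} the leading monomials of $(f)_s$ in the first ordering form $L_{n,s}$, and from Lemma~\ref{lem:right} the leading monomials of $(g)_s$ in the second ordering form $R_{n,s}$. The heart of the matter is to show that, for $s$ in the stated range, one can combine these to conclude that $(f,g)_s$ has dimension at least $|L_{n,s}\cup R_{n,s}|$ — but this is exactly the statement established inside the proof of Theorem~\ref{thm:upper} via the matrix $\mathcal{M}$ argument with the specialization vector $\mathbf{q}$. So the lower bound on $\dim((f,g)_s)$, hence the upper bound $\dim((E_n/(f,g))_s)\le a(n,s)$, is already in hand; what Theorem~\ref{thm:upper} gives is $\dim((E_n/(f,g))_s)\le a(n,s)$ unconditionally, and the new point here is the matching lower bound $\dim((E_n/(f,g))_s)\ge a(n,s)$, i.e.\ $\dim((f,g)_s)\le|L_{n,s}\cup R_{n,s}|$.

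To get $\dim((f,g)_s)\le|L_{n,s}\cup R_{n,s}| = \binom{n}{s} - \bigl(2\binom{n}{s-2}-\binom{n}{s-4}\bigr)$ I would argue that $(f,g)_s \subseteq (f)_s + (g)_s$ and that, for $s\le\lfloor n/3\rfloor+1$, the dimensions are small enough that $(f)_s$ and $(g)_s$ are "as independent as possible'' is \emph{not} what is needed; rather I want the complement to be large. The cleanest route is to exhibit $a(n,s)$ explicit monomials that are linearly independent modulo $(f,g)$. Using the canonical forms from Theorem~\ref{thm:Cstructure}, $f$ and $g$ are very sparse (in the odd case $f=\sum x_ix_{k+i}$, $g=\sum x_ix_{k+1+i}$; in the even case both involve only the pairs $x_{2i-1}x_{2i}$), so one can hope to pick a spanning set of monomials avoiding the "active'' variables in a structured way, indexed by the path-avoiding monomials $B_{n,s}\setminus(L_{n,s}\cup R_{n,s})$, and show directly that a monomial whose path stays strictly between $x=0$ and $x=n+2-2s$ cannot be expressed via $f,g$ times lower-degree elements — essentially because such an expression would, under the degrevlex leading-term analysis, have its leading term in $L_{n,s}$ or $R_{n,s}$. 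The main obstacle I expect is precisely this last step: pushing the leading-monomial description of $(f)_s$ and $(g)_s$ (each valid in its \emph{own} ordering) into a single statement controlling $(f)_s+(g)_s$, and checking that the bound $s\le\lfloor n/3\rfloor+1$ is exactly what prevents the "overlap'' monomials (paths crossing both lines) from being overcounted — i.e.\ that in this range the inclusion--exclusion for $|L_{n,s}\cup R_{n,s}|$ is the naive one, $\binom{n}{s-2}+\binom{n}{s-2}-\binom{n}{s-4}$, with no correction terms. I would handle the overlap count $|L_{n,s}\cap R_{n,s}|$ by a direct lattice-path argument (a path crossing both $x=0$ and $x=n+2-2s$ within $n+2$ steps, reflected twice), and confirm that for $2s\le n+2$ and $s\le\lfloor n/3\rfloor+1$ it equals $\binom{n}{s-4}$.
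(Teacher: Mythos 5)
Your treatment of the combinatorial identity is essentially the paper's: inclusion--exclusion over $L_{n,s}$ and $R_{n,s}$, the count $|L_{n,s}|=|R_{n,s}|=\binom{n}{s-2}$ from Lemma~\ref{lem:card} (valid since $s\le\lfloor n/3\rfloor+1$ forces $2s\le n+2$), and a reflection argument showing that in this range every path crossing both walls crosses $x=0$ first and that these paths number $\binom{n}{s-4}$. That half is correct.

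The genuine gap is in the lower bound $\dim((E_n/\lideal f,g\rideal)_s)\ge a(n,s)$, which you correctly identify as the real content but do not actually establish. Your proposed route --- showing that any element $fa+gb$ must have its leading term in $L_{n,s}\cup R_{n,s}$ --- cannot work as stated: $L_{n,s}$ and $R_{n,s}$ are leading-monomial sets with respect to \emph{two different} term orders, and in any single fixed order a combination $fa+gb$ can, after cancellation of leading terms, have a leading monomial lying outside $L_{n,s}\cup R_{n,s}$; there is no a priori reason that $\dim(\lideal f,g\rideal_s)\le|L_{n,s}\cup R_{n,s}|$ rather than up to $|L_{n,s}|+|R_{n,s}|$. (Note also that showing each monomial outside $L_{n,s}\cup R_{n,s}$ individually fails to lie in the ideal would not give linear independence of their images.) You flag this as ``the main obstacle'' but leave it unresolved. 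The paper sidesteps it entirely: the unnumbered lemma in Section~3.1.2 gives $\HS(E_n/\lideal f,g\rideal,t)\ge[(1+t)^n(1-t^2)^2]$ by the elementary rank inequality for the multiplication map $\cdot g$ on $E_n/\lideal f\rideal$ (whose series is $[(1+t)^n(1-t^2)]$ by Moreno-Soc\'ias--Snellman), and the coefficient of $t^s$ in that series is exactly $\binom{n}{s}-2\binom{n}{s-2}+\binom{n}{s-4}$. The hypothesis $s\le\lfloor n/3\rfloor+1$ is precisely the range in which this easy lower bound meets the upper bound $a(n,s)$ from Theorem~\ref{thm:upper}; with that lemma in hand the proposition reduces to the combinatorial identity you did verify.
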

\begin{proof}
Since $[(1+t)^n(1-t^2)^2] = (1+t)^n(1-2t^2+t^4)]$ is a lower bound, and $a(n,s)$ is an upper bound, it is enough to show that 
$a(n,s)=  \binom{n}{s} - 2\binom{n}{s-2} + \binom{n}{s-4}$ when $s \leq  \lfloor \frac{n}{3} \rfloor +1$.






By Lemma~\ref{lem:card} the number of paths crossing the $y$-axis or the line  $x=n+2-2s$ is the same, $n\choose{s-2}$. Hence, the inclusion-exclusion principle yields $a(n,s) = {n\choose s} - {n\choose s-2}- {n\choose s-2} +r$, where $r$ is the number of paths that  cross both $x=0$ and $x=n+2-2s$ before ending at $(n+2-2s, n+2)$.

If a path $p$ from $(1,1)$ to $(n+1-2s,n+1)$ crosses the line  $x=n+2-2s$ before the line $x=0$, then the length of  $p$ is at least $(n+2-2s)+(n+2-2s+2)+(n+2-2s)=3n+8-6s$. Hence,
$$3n+8-6s\leq \textrm{length}(p)=n$$
$$2n+2 + 6 \leq 6s$$
$$ \frac{n+1}{3}  + 1 \leq  s,$$
but we have that $s\leq  \frac{n}{3}+1$, so there is no such path.

%

We get that all paths (that cross both $x=0$ and $x=n+2-2s$) should cross the line $x=0$ before the line $x=n+2-2s$. For any such path $p$ from $(1,1)$ to $(n+1-2s,n+1)$, we chose the first point $(-1,a_p)\in p$ and the first point $(n+3-2s,b_p)\in p$. Reflect this path from the step $a_p$ until the step $b_p$, i.e., change the dirrection of all steps $a_p\leq i < b_p$ (i.e., we change  steps of type $(*,*)\to (*+1,*+1)$ to steps $(*,*)\to (*-1,*+1)$ and the converse). We get the new path $p'$, which is from the point $(1,1)$ to the point $(n+1-2s- 2(n+4-2s) ,n+1)=(2(s-4)-n+1,n+1)$. 
It is possible to reverse this procedure, than the number of such $p$ is exactly the number of $p'$ (without restrictions). 
Then in the case $s\leq \lfloor\frac{n}{3} \rfloor+1$, the number of paths $r$ is equal to $\binom{n}{s-4}$.
\end{proof}

\newsection
 
 \section{Squares of generic linear forms in the square free algebra} \label{sec:squarefree}

We will now turn to commutative setting. Recall that there is a conjecture by Iarrobino \cite{Iarrobino} stating that 
$\mathbb{C}[x_1,\ldots,x_n]/\lideal l_1^{d}, \ldots, l_r^{d}\rideal$  and  $\mathbb{C}[x_1,\ldots,x_n]/\lideal f_1,\ldots,f_{r}\rideal$ have the same Hilbert series, where 
$l_1,\ldots, l_r$ are generic linear forms and the $f_i$ are generic forms of degree $d$, except for the cases $r = n+2, r = n+3, (n,r) = (3,7), (3,8), (4,9), (5,14)$. We will now focus on the Hilbert series for the simplest exceptional case; $r = n+2$ and $d = 2$.
After a linear change of variables, it is enough to consider $S_n/\lideal x_1^2,\ldots, x_n^2,\ell_1^2,\ell_2^2\rideal$, where $l_1$ and $l_2$ are generic linear forms. 


\subsection{Bounds on the Hilbert series}
\begin{theorem}\label{thm:upper:sq-free}
Let $S_n$ be the polynomial ring in $n$ variables and let
$\ell_1, \ell_2$ be two  generic linear forms. 
Then the dimension of the $s$-th graded component of the quotient $S_n/\lideal x_1^2,\ldots, x_n^2,\ell_1^2,\ell_2^2\rideal$ is at most  $a(n,s)$.
\end{theorem}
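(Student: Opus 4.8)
The plan is to mimic, essentially verbatim, the proof of Theorem~\ref{thm:upper} in the exterior setting, replacing the exterior algebra $E_n$ by the squarefree algebra $R_n := S_n/\lideal x_1^2,\ldots,x_n^2\rideal$, and the two generic quadratic forms $f,g$ by the two squares $\ell_1^2,\ell_2^2$ of generic linear forms, viewed as elements of $R_n$. The monomials of degree $s$ in $R_n$ are exactly the squarefree monomials $B_{n,s}$, so the lattice-path bijection of Section~\ref{sec:comb} carries over word for word, and the target upper bound $a(n,s) = |B_{n,s}| - |L_{n,s}\cup R_{n,s}|$ has the same combinatorial meaning. So the only thing that really needs checking is that the two principal-ideal inputs, Lemma~\ref{lem:left} and Lemma~\ref{lem:right}, hold in this new setting.

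\textbf{Key steps.} First I would record the squarefree analogue of the Moreno-Soc\'ias--Snellman computation: if $\ell$ is a generic linear form, then $\HS(R_n/\lideal \ell^2\rideal, t) = [(1+t)^n(1-t^2)]$, and moreover for every $n'\le n$ the restriction $\ell^2|_{x_{n'+1}=\cdots=x_n=0}$ is again the square of a generic linear form in $n'$ variables, so that $\dim\big(\lideal \ell^2\rideal_{s'}\big|_{x_{n'+1}=\cdots=x_n=0}\big) = \min\big(\binom{n'}{s'},\binom{n'}{s'-2}\big)$. This is exactly the hypothesis of Lemma~\ref{lem:left}; the truncation hypothesis is clean because setting variables to zero sends a generic linear form to a generic linear form in fewer variables. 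Hence, with $f := \ell_1^2$ and the degrevlex order induced by $x_1\succ\cdots\succ x_n$, Lemma~\ref{lem:left} gives that $L_{n,s}$ is the set of leading monomials of $\lideal \ell_1^2\rideal_s$ in $R_n$, and symmetrically (Lemma~\ref{lem:right}, using the reversed order) $R_{n,s}$ is the set of leading monomials of $\lideal \ell_2^2\rideal_s$. Next I would run the genericity-perturbation argument from the proof of Theorem~\ref{thm:upper} unchanged: pick $A_s^{(f)}\subseteq B_{n,s-2}$ with $|A_s^{(f)}| = |L_{n,s}|$ witnessing the leading term set for $\ell_1^2$, pick $\bar A_s^{(g)}\subseteq A_s^{(g)}$ of size $|R_{n,s}\setminus L_{n,s}|$ witnessing the non-vanishing minor for $\ell_2^2$ on the columns $R_{n,s}\setminus L_{n,s}$, scale $\ell_1^2$ by the monomial substitution $x_i\mapsto t_i x_i$ with $\mathbf t = \mathbf q = (C^{-2C},\ldots,C^{-2C^n})$ so that one term of the expansion of the big $|L_{n,s}\cup R_{n,s}|\times|L_{n,s}\cup R_{n,s}|$ minor $\mathcal M_{L_{n,s}\cup R_{n,s}}$ dominates all others, and conclude that $\det(\mathcal M_{L_{n,s}\cup R_{n,s}})\ne 0$. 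This forces $\dim\lideal \ell_1^2,\ell_2^2\rideal_s \ge |L_{n,s}\cup R_{n,s}|$ in $R_n$, hence $\dim(S_n/\lideal x_1^2,\ldots,x_n^2,\ell_1^2,\ell_2^2\rideal)_s \le a(n,s)$. One small point: the scaling substitution $x_i\mapsto t_i x_i$ sends $\ell_1^2 = (\sum c_i x_i)^2$ to $(\sum t_i c_i x_i)^2$, which is again a square of a (now differently) generic linear form, and for generic $\mathbf t$ it still satisfies Lemma~\ref{lem:left}; this is the direct replacement of the map $f\mapsto f^{(\mathbf t)}$ used in the exterior proof, and the minor identity $\det(M_{f^{(\mathbf t)},L'}) = \det(M_{f,L'})\prod_{\beta\in L'}t_\beta / \prod_{\alpha\in A_s^{(f)}}t_\alpha$ holds for the same bookkeeping reason.

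\textbf{Main obstacle.} The combinatorial and linear-algebra machinery transfers mechanically; the only genuine content is the principal-ideal base case, i.e.\ verifying $\HS(R_n/\lideal \ell^2\rideal,t) = [(1+t)^n(1-t^2)]$ together with its truncation-stability. The cleanest route is to invoke Stanley's theorem (cited in the introduction) that the monomial complete intersection $S_n/\lideal x_1^2,\ldots,x_n^2\rideal$ has the Strong Lefschetz property, which gives exactly $\HS(S_n/\lideal x_1^2,\ldots,x_n^2,\ell^2\rideal,t) = [(1+t)^n(1-t^2)]$ for a generic (indeed, for a sufficiently general, and by upper-semicontinuity for a Zariski-generic) linear form $\ell$; restricting variables to zero replaces $n$ by $n'$ and $\ell$ by a generic linear form in $n'$ variables, which preserves the property. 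I would therefore spell out this reduction to Stanley's result as the first lemma of the subsection, and then the proof of Theorem~\ref{thm:upper:sq-free} becomes a near-copy of the proof of Theorem~\ref{thm:upper}, which I would present by indicating the substitutions $E_n\rightsquigarrow R_n$, $f\rightsquigarrow \ell_1^2$, $g\rightsquigarrow \ell_2^2$ and referring to the earlier argument for the perturbation estimate rather than repeating the constant-chasing in full.
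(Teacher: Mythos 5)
Your proposal matches the paper's proof: the authors likewise observe that the argument of Theorem~\ref{thm:upper} carries over verbatim once the hypotheses of Lemma~\ref{lem:left} and Lemma~\ref{lem:right} are verified for $\ell_1^2,\ell_2^2$ in the squarefree algebra, which they do by citing Stanley's result that $\HS(S_n/\lideal x_1^2,\ldots,x_n^2,\ell^2\rideal,t)=[(1+t)^n(1-t^2)]$. Your write-up is simply a more detailed account of the same reduction, including the truncation-stability point that the paper leaves implicit.
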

\begin{proof}
We can use the same proof as in Theorem ~\ref{thm:upper}. Indeed, the references to Lemma \ref{lem:left} and  Lemma \ref{lem:right} are valid, since for a generic linear $\ell$, the Hilbert series of $S_n/\lideal x_1^2,\ldots, x_n^2,\ell^2 \rideal$ is equal to $[(1+t)^n(1-t^2)]$ (see~\cite{stanley}). 
\end{proof}
\newsubsection

\subsection{Proof of Theorem \ref{thm:conj12}}

Let $a_e(n,s)$ and $a_c(n,s)$ be the dimensions of the $s$-graded components of $E_n/\lideal f,g\rideal$ and of $S_n/\lideal x_1^2,\ldots, x_n^2,\ell_1^2,\ell_2^2\rideal$ resp., where $f,g\in E_n$ are generic quadratic forms and $\ell_1,\ell_2\in S_n$ are generic linear forms.
\begin{remark}
In fact, the algebra $S_n/\lideal x_1^2,\ldots, x_n^2,\ell_1^2,\ell_2^2\rideal$ is isomorphic to 
 the algebra 
and $S_{n+2}/\lideal x_1^2,\ldots,x_{n+1}^2,x_{n+2}^2,\ell_1^*,\ell_2^*\rideal,$ where $\ell_1^*=\ell_1-x_{n+1}$ and $\ell_2^*=\ell_2-x_{n+2}$. 

Thus, we can define the coefficients $a_c(n,s)$ for $n\geq-2$ and $s\geq 0$. 
\end{remark}

\begin{lemma}\label{lem:rec-conj}

Let $k$ and $s$ be positive integers. Then
$$a_e(2k,s)=\sum_{\substack{r\in\mathbb{N}:\\ 2|(s-r)}} 2^r\binom{k}{r}a_c\left(k-r-2,\frac{s-r}{2}\right).$$
\end{lemma}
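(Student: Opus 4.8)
The plan is to prove Lemma~\ref{lem:rec-conj} by exhibiting a direct sum decomposition of $E_{2k}/\lideal f,g\rideal$ indexed by subsets of the canonical pairs of variables. By Theorem~\ref{thm:Cstructure}(2), after a change of variables we may take $f=\sum_{i=1}^k x_{2i-1}x_{2i}$ and $g=\sum_{i=1}^k\alpha_i x_{2i-1}x_{2i}$ with distinct nonzero $\alpha_i$. Group the generators into the $k$ blocks $V_i=\{x_{2i-1},x_{2i}\}$. Every monomial in $E_{2k}$ meets each block $V_i$ in $0$, $1$, or $2$ variables; let $T\subseteq\{1,\dots,k\}$ record the blocks met in exactly one variable, so $|T|=r$, and on each block in $T$ there are $2$ choices of which variable, giving the factor $2^r\binom{k}{r}$. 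The idea is that both $f$ and $g$ act ``blockwise'': multiplication by $x_{2i-1}x_{2i}$ kills any monomial already using a variable from $V_i$, and otherwise inserts the whole block. So modulo $\lideal f,g\rideal$ the component spanned by monomials with prescribed ``singleton pattern'' on $T$ and arbitrary behaviour on the remaining $k-r$ blocks should be governed by the images of $f$ and $g$ acting only on those remaining blocks.

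The key steps, in order, are as follows. First I would fix $T$ with $|T|=r$ and one choice $y_j\in V_j$ for each $j\in T$, and let $m_T=\prod_{j\in T}y_j$ be the corresponding squarefree degree-$r$ monomial; consider the subspace $W_{T}$ of $E_{2k}$ spanned by $m_T\cdot u$ where $u$ ranges over monomials in the variables of the blocks $V_i$, $i\notin T$. Note $E_{2k}$ decomposes as a direct sum of the $W_T$ over all such $(T,\text{choices})$, and—this is the crucial structural point—each $W_T$ is stable under multiplication by $f$ and by $g$, because $f\cdot(m_T u)=m_T\cdot(f^{(T)}u)$ where $f^{(T)}=\sum_{i\notin T}x_{2i-1}x_{2i}$ is $f$ restricted to the complementary $2(k-r)$ variables, any block from $T$ being annihilated, and similarly for $g$. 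Second, I would observe that $m_T\cdot(-)$ is a bijection from the even subalgebra generated by the complementary blocks onto $W_T$, and intertwines multiplication by $f^{(T)},g^{(T)}$ with multiplication by $f,g$; hence $W_T/\lideal f,g\rideal_{W_T}\cong (E_{2(k-r)}^{\mathrm{ev}}/\lideal f^{(T)},g^{(T)}\rideal)$ with a degree shift by $r$. Third, I would identify the algebra $E_{2(k-r)}/\lideal f^{(T)},g^{(T)}\rideal$ in degree $(s-r)/2$ (after halving, since everything in sight has even degree within a block-even subalgebra) with the commutative squarefree algebra $S_{k-r}/\lideal z_1^2,\dots,z_{k-r}^2,\ell_1^2,\ell_2^2\rideal$ in degree $(s-r)/2$: send the pair $x_{2i-1}x_{2i}\mapsto z_i$, so that $f^{(T)}\mapsto \ell:=z_1+\cdots+z_{k-r}$ and $g^{(T)}\mapsto \ell':=\sum\alpha_i z_i$, and note $z_i^2=0$ automatically because $(x_{2i-1}x_{2i})^2=0$ in $E$. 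Two generic linear forms specialize to these $\ell,\ell'$ (distinct nonzero $\alpha_i$ is exactly genericity of the pencil in $2$ variables' worth), and by the Remark the index $k-r$ of $a_c$ is to be read via the $+2$ shift, which is why $a_c(k-r-2,\cdot)$ appears rather than $a_c(k-r,\cdot)$. Summing the contribution $2^r\binom{k}{r}\,a_c(k-r-2,(s-r)/2)$ over all $r$ with $2\mid(s-r)$ yields the claimed identity for $a_e(2k,s)$.

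The main obstacle I anticipate is making the algebra isomorphism in the third step fully rigorous, in particular checking that the ideal generated in $E_{2(k-r)}$ by $f^{(T)}$ and $g^{(T)}$ corresponds exactly—not just surjects onto—the commutative squarefree ideal after the substitution $x_{2i-1}x_{2i}\mapsto z_i$, and pinning down the correct index bookkeeping (the halving of degrees, the shift $r$, and the $-2$ coming from the Remark's reformulation $S_n\cong S_{n+2}/(x_1^2,\dots,x_{n+2}^2,\ell_1^*,\ell_2^*)$). One has to verify that the even subalgebra of $E_{2m}$ generated by the $m$ products $x_{2i-1}x_{2i}$ is genuinely a polynomial-type object, i.e. that these $m$ commuting square-zero elements are algebraically independent subject only to $z_i^2=0$ — this is true because those products span a space of pairwise-commuting even elements with the only relations being $z_i^2=0$, so the subalgebra is $\cong S_m/(z_1^2,\dots,z_m^2)$ — and that passing to the quotient by $(f^{(T)},g^{(T)})$ commutes with this identification. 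Once the blockwise stability of multiplication by $f$ and $g$ is established (which is a short direct computation), the remaining work is careful but routine dimension-counting and index-matching, with the generic specialization statement borrowed from Theorem~\ref{thm:Cstructure}.
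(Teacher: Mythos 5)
Your argument is essentially the paper's own proof: both decompose monomials by which of the $k$ blocks $\{x_{2i-1},x_{2i}\}$ they meet in exactly one variable (giving the factor $2^r\binom{k}{r}$), observe that multiplication by $f$ and $g$ kills those singleton blocks and acts blockwise on the rest, and identify the remaining quotient with the commutative squarefree algebra in $k-r$ degree-two variables modulo two generic linear forms, whence $a_c(k-r-2,(s-r)/2)$ via the Remark's index shift. Just make sure that in your definition of $W_T$ the element $u$ ranges over products of \emph{complete} blocks $x_{2i-1}x_{2i}$ (as your second step indicates), since otherwise the claimed direct sum decomposition of $E_{2k}$ fails; with that reading the proof is correct.
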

\begin{proof}

By Theorem \ref{thm:Cstructure}, we can assume that 
$$f=b_1x_1x_2+b_2x_3x_4+\ldots+b_kx_{2k-1}x_{2k},$$
and
$$g=c_1x_1x_2+c_2x_3x_4+\ldots+c_kx_{2k-1}x_{2k}.$$

For a subset $I \subset [k] := \{1,2,\ldots,k\},$ let 

$$f_I = \sum_{i \in [k] \setminus I} b_i x_{2i-1} x_{2i}$$
and
$$g_I = \sum_{i \in [k] \setminus I} c_i x_{2i-1} x_{2i}.$$

Denote by $E_I$ the subalgebra of $E_n$ generated by $\{x_{2i-1} x_{2i}\}, i \in [k] \setminus I]$. Notice that $f_I,g_I \subseteq E_I$, so we can form
$R_I := E_I/(f_I,g_I)$.

For a subset $J\subseteq I$ we define the monomial
$$m(I,J)=\prod_{j\in J} x_{2j-1} \prod_{i\in I\setminus J}x_{2i} \in E_n.$$

Since $x_{2i-1} \cdot x_{2i-1} x_{2i} = x_{2i} \cdot x_{2i-1} x_{2i} = 0$ in $E_n$, if follows that 
$$\{m(I,J) \cdot e, J \subseteq I \subseteq [k] \text{ and } e \text{ a basis element of } R_I\}$$
is a basis for $E_n$ mod $\lideal f,g \rideal$, where we in the product $m(I,J) \cdot e$ regard $e$ as an element in $E_n$.

Now
$$a_e(2k,s)=\dim(E_{2k}/{\lideal f,g\rideal})_s) = \sum_{\substack{I \subseteq [k] \\ 2 | s - |I|}} 2^{|I|} \dim (R_I)_{s-|I|},$$
since we have $2^{|I|}$ possibilities to choose $J$.

But $R_I$ is a commutative algebra isomorphic to $$\mathbb{C}[y_i, i \in [k] \setminus I]/\lideal \sum_{i \in [k] \setminus I} b_i y_i, \sum_{i \in [k] \setminus I} c_i y_{i} \rideal,$$ 
each $y_i$ of degree two, so
\begin{align*}
 \sum_{\substack{I \subseteq [k] \\ 2 | s - |I|}} 2^{|I|} \dim (R_I)_{s-|I|} &=  \sum_{\substack{I \subseteq [k] \\ 2 | s - |I|}} 2^{|I|} a_c(k-|I| - 2,\frac{s- |I|}{2}) \\
 &=\sum_{\substack{r \in \mathbb{N} \\ 2 | s - r}} 2^{r} \binom{k}{r} a_c(k-r - 2,\frac{s- r}{2}).
\end{align*}

\end{proof}

\begin{lemma}\label{lem:rec-path}
Let $k$ and $s$ be positive integers. Then
$$a(2k,s)=\sum_{\substack{r\in\mathbb{N}:\\ 2|(s-r)}} 2^r\binom{k}{r}a\left(k-r-2,\frac{s-r}{2}\right).$$
\end{lemma}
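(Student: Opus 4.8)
The plan is to prove the purely combinatorial identity for $a(2k,s)$ by mirroring the structure of the proof of Lemma~\ref{lem:rec-conj}, but working directly with lattice paths instead of with exterior algebra bases. Recall that $a(2k,s)$ counts the lattice paths in the $(2k+2-2s)\times(2k+2)$ rectangle from the bottom-left to the top-right corner using steps $(x,y)\to(x\pm1,y+1)$, with forced first and last step to the right. The idea is to decompose such a path according to how it interacts with the $k$ pairs of ``middle'' rows: between level $0$ and level $1$ and between level $2k+1$ and level $2k+2$ the path is forced, and in the $2k$ intermediate levels the path does $s$ left steps and $2k-s$ right steps. The goal is to exhibit a bijection realizing the claimed sum, where $r$ plays the role of $|I|$, the factor $2^r$ counts a binary choice at each of $r$ distinguished places, $\binom{k}{r}$ chooses which places, and $a(k-r-2,\tfrac{s-r}{2})$ counts a ``reduced'' path of half the height.

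First I would set up the right notion of decomposition. Group the $2k$ intermediate steps into $k$ consecutive pairs $(2j-1,2j)$ for $j=1,\dots,k$; each pair is one of the four blocks RR, RL, LR, LL, contributing net vertical change $+2$ and net horizontal change $+2,0,0,-2$ respectively. Call a pair \emph{balanced} if it is RL or LR (net horizontal $0$) and \emph{unbalanced} otherwise. Let $r$ be the number of balanced pairs; since each L-step lies in exactly one pair, the number of L-steps is (number of balanced pairs) plus twice (number of LL pairs), so $s = r + 2\cdot\#\{\text{LL pairs}\}$, forcing $2\mid(s-r)$, and $\#\{\text{LL pairs}\}=\tfrac{s-r}{2}$, $\#\{\text{RR pairs}\}=k-r-\tfrac{s-r}{2}$. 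Choosing which $r$ of the $k$ pairs are balanced gives $\binom{k}{r}$; each balanced pair is independently RL or LR, giving $2^r$; and the remaining $k-r$ pairs are each RR or LL, to be arranged. The remaining data — the multiset of $\tfrac{s-r}{2}$ LL-blocks and $k-r-\tfrac{s-r}{2}$ RR-blocks placed among the $k-r$ unbalanced slots, together with the constraint that the horizontal coordinate must stay within $[0,2k+2-2s]$ throughout — should be exactly equinumerous with the lattice paths counted by $a(k-r-2,\tfrac{s-r}{2})$, i.e. paths in a $\big((k-r)-2\cdot\tfrac{s-r}{2}+2\big)\times\big((k-r)+2\big) = (2k+2-2s)\times(k-r+2)$ rectangle.

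The key technical step is checking that the horizontal-confinement constraint transfers correctly under this contraction. Contracting each balanced pair to a single ``stay'' move and each RR/LL pair to a single right/left move halves the number of intermediate steps from $2k$ to $k$, and one must verify that the partial horizontal coordinates of the contracted walk lie in $[0,2k+2-2s]$ if and only if the original ones do. This is where the genericity of the rectangle dimensions matters: because each original pair moves the coordinate by $0$ or $\pm2$ and the forbidden lines are at $x=0$ and $x=2k+2-2s$ (an even distance apart), the coordinate at even times determines whether a wall is ever touched, so the contraction is coordinate-faithful. Once this is established, the contracted object is precisely a generally admissible path of size $(k-r-2,\tfrac{s-r}{2})$ after reindexing, completing the bijection. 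I expect this confinement-preservation check to be the main obstacle — it is the one place where the argument is not a routine bookkeeping of blocks but requires genuinely using the geometry of the rectangle. An attractive alternative, which would sidestep the bijective fiddling entirely, is to prove the identity for the generating polynomials $A_n(t):=\sum_s a(n,s)t^s$ via transfer matrices: a generally admissible path of size $(n,s)$ is counted by a weighted walk of length $n$ on $\mathbb{Z}$ with an absorbing/reflecting structure encoding the two walls, and the ``pair of steps'' operation corresponds to squaring the transfer matrix, from which the recursion falls out after extracting the appropriate matrix entry; I would try this route first and fall back to the explicit bijection only if the boundary conditions make the matrix manipulation awkward.
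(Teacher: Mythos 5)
Your decomposition is exactly the one in the paper's proof: group the $2k$ unforced steps into $k$ consecutive pairs, let the $r$ balanced (RL/LR) pairs account for the factor $2^r\binom{k}{r}$, and contract the remaining RR/LL pairs into a shorter path counted by $a\bigl(k-r-2,\frac{s-r}{2}\bigr)$; the confinement-transfer check you single out is indeed the (tacit) crux of the paper's argument, and your parity justification is sound, since the pair-endpoints sit at odd abscissae while both walls sit at even ones, so an endpoint in $[0,2k+2-2s]$ is automatically in $[1,2k+1-2s]$ and the within-pair position never leaves the rectangle. One correction: the rectangle for $a\bigl(k-r-2,\frac{s-r}{2}\bigr)$ is $(k-s)\times(k-r)$, not $(2k+2-2s)\times(k-r+2)$ as you write (your width evaluation $(k-r)-(s-r)+2=2k+2-2s$ is false, and the height should be $(k-r-2)+2=k-r$); with the correct dimensions the identification still goes through once one also observes that confinement forces the first and last steps of the contracted walk to be right-moves, so it really is a generally admissible path of size $\bigl(k-r-2,\frac{s-r}{2}\bigr)$.
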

\begin{proof}
For a path $p$ we consider a pair of steps such that
$$(2x+1,y)\to(2x+1\pm 1,y+1)\to (2x+1,y+2),$$
i.e., one step to right or left and another to the opposite. 
Note that if $p$ is inside rectangle $(2k+2-2s)\times (2k+2)$, then after deleting these two steps the new path $p'$ is inside $(2k+2-2s)\times 2k$. 
Furthermore, for any path $q$ inside $(2k+2-2s)\times 2k$, if we insert such two steps in any odd place then the new paths $q'$ (left-right) and $q''$ (right-left) are both inside $(2k+2-2s)\times (2k+2)$.

For any path $p$ inside the rectangle $(2k+2-2s)\times (2k+2)$, we collect the $y$-coordinate for the left-right steps in a set $I_1\subseteq [k]$ and the $y$-coordinate for the right-left steps in a set  $I_2\subseteq [k]$. 
After deleting all such pairs we get a path inside $(2k+2-2s)\times (2k+2-2|I_1|-2|I_2|)$. We obtain
$$a(2k,s)=\sum_{I_1\sqcup I_2\subseteq [k]} b(2k+2-2s,2k+2-2|I_1|-2|I_2|),$$

where $b(2k+2-2s,2k+2-2r)$ is the number of paths inside $(2k+2-2s)\times (2k+2-2r)$ such that the first step is to the left, the last to right, and all other steps go by pairs; both to the right or both to the left. Note that if $(s-r)$ is odd, then $b(2k+2-2s,2k+2-2r)=0$. Indeed, let $u$ be the number of ``left'' pairs, then 
$$2k+2-2s=2k+2-2r-4u\ \ \Longleftrightarrow \ \ r-s=2u.$$
 
It follows that 
$$\sum_{I_1\sqcup I_2\subseteq [k]} b(2k+2-2s,2k+2-2|I_1|-2|I_2|)=\sum_{\substack{r\in\mathbb{N}:\\ 2|(s-r)}} 2^r\binom{k}{r}b(2k+2-2s,2k+2-2r).$$
Now  $b(2k+2-2s,2k+2-2r)$ is equal to the number of paths inside $(2k-2s)\times (2k-2r)$ where all steps are doubled, i.e., from $(x,y)$ to $(x-2,y+2)$ or $(x+2,y+2)$.  Hence, $b(2k+2-2s,2k+2-2r)$ is equal to the number of paths inside $(k-s)\times (k-r)$, i.e., 
$$b(2k+2-2s,2k+2-2r)=a(k-s-2, \frac{s-r}{2}),$$
concluding the proof.

\end{proof}

\begin{proof}[Proof of Theorem~\ref{thm:conj12}]
If Conjecture~\ref{conj:paths-sqfree} holds for any number of variables, then from Lemma~\ref{lem:rec-conj} and Lemma~\ref{lem:rec-path} it follows that Conjecture~\ref{conj:paths} should hold for any even number of variables.

Assume that Conjecture~\ref{conj:paths} holds for any even number of variables. Then 
$$a(2k,s) = \sum_{\substack{r\in\mathbb{N}:\\ 2|(s-r)}} 2^s\binom{k}{r}a\left(k-r-2,\frac{s-r}{2}\right)$$
by Lemma \ref{lem:rec-conj}.

On the other hand, by Lemma \ref{lem:rec-path}, it holds that
$$a_e(2k,s)=\sum_{\substack{r\in\mathbb{N}:\\ 2|(s-r)}} 2^s\binom{k}{r}a_c\left(k-r-2,\frac{s-r}{2}\right),$$
so the assumption $a(2k,s) = a_e(2k,s)$ gives us
$$\sum_{\substack{r\in\mathbb{N}:\\ 2|(s-r)}} 2^s\binom{k}{r}a\left(k-r-2,\frac{s-r}{2}\right)=\sum_{\substack{r\in\mathbb{N}:\\ 2|(s-r)}} 2^s\binom{k}{r}a_c\left(k-r-2,\frac{s-r}{2}\right).$$
By Theorem~\ref{thm:upper:sq-free} we know that 
$$a(n,m)\geq a_c(n,m),$$
so we get the equality
$$a\left(k-r-2,\frac{s-r}{2}\right)=a_c\left(k-r-2,\frac{s-r}{2}\right),$$
for any $r\leq s$ such that $2|(s-r)$. Hence, for any pair $m\leq n$ we have $a(n,m)= a_c(n,m).$
\end{proof}

It is natural to ask if there might be other connections between the series of quotients of $E_n$ and quotients of $S_n$. 
We end up by giving a question in this direction, whose formulation is based upon computer experiments.

\begin{question} \label{question:gen} Let $\ell_1, \ell_2 \in S_n$ be two generic linear forms, and let $f, g \in E_n$ be two generic quadratic forms. Do the two algebras 
$$S_n/(x_1^2,\ldots,x_n^2,\ell_1^{2 d_1}, \ell_2^{2 d_2})$$ and 
$$E_n/(f^{d_1},g^{d_2})$$
have the same Hilbert series?
\end{question}
\begin{remark}
In the case $d_1=d_2 = 1$, Question \ref{question:gen} is a weaker version of Conjecture \ref{conj:paths-sqfree}.
\end{remark}
 
 \medskip\noindent
{\bf Acknowledgements.} The authors want to thank the participants of the Stockholm problem solving seminar for many fruitful discussions during the progress of this work.

%
%

\end{document}